%% file: main.tex
\documentclass[11pt]{article}
\usepackage[margin=1in]{geometry}
\usepackage{graphicx}
\usepackage{xspace}
\usepackage{xcolor}
\usepackage{amscd}
\usepackage{amsmath}
\usepackage{amssymb}
\usepackage{amstext}
\usepackage{amsthm}
\usepackage{bbold}
\usepackage{bm}
\usepackage{colonequals}
\usepackage{mathtools}
\usepackage{booktabs,tabularx,array}
\usepackage{enumitem}
\usepackage{microtype}
\usepackage{hyperref}

\DeclareSymbolFont{bbold}{U}{bbold}{m}{n}
\DeclareSymbolFontAlphabet{\mathbbold}{bbold}

\newcommand{\F}{\mathbb F}
\newcommand{\cA}{\mathcal A}
\newcommand{\one}{\mathbf 1}
\newcommand{\Tr}{\operatorname{Tr}}
\newcommand{\Spec}{\operatorname{Spec}}

\newcommand{\Cay}{\operatorname{Cay}}

\newtheorem{theorem}{Theorem}[section]
\newtheorem{lemma}[theorem]{Lemma}
\newtheorem{proposition}[theorem]{Proposition}

\theoremstyle{definition}

\newtheorem{question}[theorem]{Question}

\newtheorem{example}[theorem]{Example}
\theoremstyle{remark}
\newtheorem{remark}[theorem]{Remark}

\setlist[itemize]{leftmargin=1.5em,itemsep=0.3em,topsep=0.4em}
\setlist[enumerate]{leftmargin=1.7em,itemsep=0.3em,topsep=0.4em}
\newcolumntype{Y}{>{\raggedright\arraybackslash}X}

\allowdisplaybreaks
\numberwithin{equation}{section}

\title{Abelian Cayley High-Dimensional Expanders with Polylogarithmic Degree}
\author{Songtao Mao\thanks{\texttt{smao13@jhu.edu}, Department of Computer Science, Johns Hopkins University.}}
\date{}

\begin{document}
\maketitle

\begin{abstract}
We construct an explicit infinite family of simple two-dimensional Cayley complexes over $\F_2^n$ whose degree is polynomial in $n$ and whose nontrivial vertex-link eigenvalues lie in $[-\lambda,\lambda]$ for every fixed $\lambda>0$.  For every fixed $d\ge2$, we also obtain an explicit infinite family of weighted $d$-dimensional Cayley complexes over $\F_2^n$ with codimension-two local spectral norm at most $1/d$ and Cayley degree $\Theta_d(n)$. Our two-dimensional construction uses evaluation at rational points of algebraic curves to produce projective direction sets and many functions affine along these directions, which may be useful for further constructions and improvements.
\end{abstract}

\newpage
\setcounter{tocdepth}{2}
\tableofcontents
\newpage

\input{01-introduction}
\input{02-preliminaries}
\input{03-relation-matrices}
\input{04-ag-direction-system}
\input{05-weighted-dimension-lift}
\input{06-open-problems-and-limitations}
\input{Aknowledgement}
\bibliographystyle{alpha}
\bibliography{references}
\appendix
\input{appendix-ai-contributions}
\input{appendix-ag-background}

\end{document}

%% file: 01-introduction.tex
\section{Introduction}
\label{sec:introduction}

High-dimensional expanders (HDXs) generalize expander graphs to simplicial complexes. Their study brings together combinatorial, spectral, and topological notions of expansion, which need not coincide in higher dimensions. We focus on local spectral expansion, whose origins lie in the cohomological criterion of Garland~\cite{Gar73}.

HDXs have applications to face walks~\cite{KM17}, agreement testing~\cite{DK17}, PCPs~\cite{BMVY25}, locally testable and quantum codes~\cite{DEL22,EKZ20,PK22}, and high-rate approximate local list decoding, hardness amplification, and pseudorandom generators~\cite{DHIP26}. Related local-to-global methods give algorithms for approximate counting~\cite{ALGV19} and constraint satisfaction problems on HDXs~\cite{AJT19}. These applications require different expansion assumptions, some stronger than our link spectral bounds.

Several constructions achieve low degree together with strong expansion. Suitable skeleta of Ramanujan complexes give bounded-degree local spectral expanders in every fixed dimension and for arbitrarily small spectral parameters~\cite{LSV05,DK17}. Kaufman and Oppenheim~\cite{KO18} gave a more elementary bounded-degree construction using coset complexes, with one-sided local spectral expansion. More recently, Hopkins and Ray~\cite{HR26} constructed subpolynomial-degree complexes with local spectral, coboundary, and swap-coboundary expansion. A different question is how small the degree can be when one requires abelian Cayley symmetry.

In this paper, we consider abelian Cayley complexes over $\F_2^n$, with $N=2^n$ vertices. For Cayley graphs, Alon and Roichman~\cite{AR94} showed that $O(n)$ random generators suffice for spectral expansion. However, reducing the degree of Cayley HDXs is more difficult, since their links must also satisfy the required spectral bounds. Independently sampling polynomially many uniform generators does not suffice: with high probability, the resulting Cayley graph contains no triangles.

Earlier work already achieved polylogarithmic degree for other notions of higher-dimensional expansion. Conlon~\cite{Con19} constructed Cayley hypergraph expanders over binary vector spaces, and Conlon, Tidor, and Zhao~\cite{CTZ20} extended this approach to every uniformity. Their results concern face walks, discrepancy, and geometric overlap, but do not give arbitrarily small two-sided vertex-link parameters. This distinction matters even in dimension two: a fixed vertex-link bound $\lambda<1/2$ in a connected complex implies a uniform spectral gap in its one-skeleton~\cite{Opp18}.

For arbitrarily small local spectral parameters, Golowich~\cite{Gol23} constructed abelian Cayley local spectral expanders of degree $2^{O(\sqrt n)}$. Dikstein, Liu, and Wigderson~\cite{DLW25} improved this to $2^{n^\varepsilon}$ for every fixed $\varepsilon>0$, in every fixed dimension and for every fixed local spectral parameter. They asked whether the degree can be reduced further to a polynomial in $n$. We address the two-dimensional case of this question.

\begin{question}\cite{DLW25}
\label{ques:polynomial-degree}
For every fixed $\lambda>0$, can we construct two-dimensional Cayley complexes over $\F_2^n$ whose vertex links are two-sided $\lambda$-spectral expanders and whose Cayley degree is bounded by a polynomial in $n$?
\end{question}

Dikstein, Liu, and Wigderson~\cite{DLW25} also noted that their approach cannot attain polynomial degree without a new idea. In this work, we answer Question~\ref{ques:polynomial-degree} affirmatively.

\subsection{Our results}

Our first theorem gives unweighted two-dimensional Cayley complexes with arbitrarily strong two-sided spectral expansion in their vertex links.

\begin{theorem}
\label{thm:main-two-dimensional}
For every fixed $\lambda>0$, there is an explicit infinite family of two-dimensional Cayley complexes over $\F_2^n$, with $n\to\infty$ along the family. Each complex is simple, pure, unweighted, and translation invariant. Its one-skeleton and vertex links are connected, the random-walk operator of every vertex link has all nontrivial eigenvalues in $[-\lambda,\lambda]$, its Cayley degree is $n^{O_\lambda(1)}$, and every edge lies in $O_\lambda(\log n)$ triangles.
\end{theorem}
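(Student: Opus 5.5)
The plan is to build the complex from a generating set $S\subset\F_2^n$. A triangle is $\{x,x+s,x+s+t\}$ whenever $s,t,s+t\in S$. By translation invariance, every vertex link is then the graph $L$ on vertex set $S$ in which $s\sim t$ iff $s+t\in S$. So everything reduces to choosing $S$ with three properties. First, $|S|=n^{O_\lambda(1)}$. Second, the graph $L$ is connected and every nontrivial eigenvalue of its random-walk operator has absolute value at most $\lambda$. Third, each $s$ has only $O_\lambda(\log n)$ neighbours in $L$. Simplicity, purity and translation invariance are then automatic for a Cayley complex with $0\notin S$ and $S=-S$ (trivial over $\F_2$). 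Connectivity of the one-skeleton follows once $S$ spans $\F_2^n$; alternatively, since $\lambda<1/2$ may be assumed, it follows from connectivity of the links and Garland/Oppenheim~\cite{Opp18}.

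\textbf{Step 1 (shape of $S$).} I would take $S$ to be a union of small ``affine pieces''. Fix a finite field $\F_q$ of characteristic $2$, with $q=q(\lambda)$ chosen at the end. Let $P$ be a set of projective directions obtained as evaluations at rational points of an algebraic curve, as in the direction system of Section~4. For each direction $p\in P$, let $S_p$ be the image of a small $\F_2$-space $U_p\setminus\{0\}$ under a linear map $\varphi_p$ into $\F_2^n$. The maps $\varphi_p$ are built from functions affine along the directions (Riemann--Roch spaces on the curve). Inside one piece, any two distinct elements sum to a third element of the piece, so $L$ restricted to $S_p$ is complete. The cross edges between pieces are encoded by the relation matrices of Section~3. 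This presents the random-walk operator of $L$ as a block operator whose blocks are indexed by $P\times P$. The number of pieces containing a given $s$, times the piece size, will be $O_\lambda(\log n)$; this yields the triangle-per-edge bound.

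\textbf{Step 2 (spectral reduction).} Using the relation-matrix formalism of Section~3, I would decompose the walk on $L$ into an ``intra-piece'' part, which is nearly a complete graph and hence has eigenvalues $O(1/|U_p|)$, and an ``inter-piece'' part. The inter-piece part factors through an incidence operator between directions and points of the curve. Its nontrivial spectrum is governed by character sums over the rational points of the curve. I expect to bound these by the Weil bound, obtaining $O(g\sqrt q/\#\text{points})$, or alternatively by the distance properties of the associated algebraic-geometry code.

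\textbf{Step 3 (parameters).} I would choose the genus, the degree of the divisor, and $q$ so that the Weil error and the intra-piece error are both at most $\lambda$. These choices should make $q$, the genus and the piece dimension depend only on $\lambda$. Meanwhile $n$ grows with the number of rational points used (for instance via a tower, or via concatenation over $\F_2$). Then $|S|$ is polynomial in the number of points, which is polynomial in $n$, giving degree $n^{O_\lambda(1)}$. Connectivity of $L$ follows from the spectral gap.

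\textbf{Main obstacle.} The hardest step is Step~2, namely certifying the two-sided bound, including the lower bound $\ge -\lambda$, on the cross-piece operator. Two things must hold simultaneously. The triangles must come only from the intended affine relations: there must be no accidental coincidences $s+t\in S$ across pieces, which would spoil both simplicity of the analysis and the $O(\log n)$ count. And the remaining incidence operator must be genuinely expanding. I would first try to prove that accidental relations are excluded by a minimum-distance argument for the evaluation code. I would then reduce the incidence operator's spectrum to exponential sums on the curve and apply Weil.
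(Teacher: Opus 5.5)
Your proposal has a genuine gap exactly where the problem is hard. Nothing in it forces $n=\dim\operatorname{span}_{\F_2}S$ to be polynomially related to $|S|$. Step~3 only asserts that $n$ grows with the number of rational points and that $|S|$ is polynomial in that number, but a working construction does not look like this. In the paper the generators are indexed by $F^\times\times F^t$ with $F=\F_{q^2}$, so $|S|=(q^2-1)q^{2t}$ is \emph{exponential} in the number $\nu=O(t)$ of curve points. The group is $\F_2^M/\operatorname{rowspan}_{\F_2}H$, and one must prove $n=\dim\ker H\ge 2h\,2^{t/3}$. Kernel vectors are $\Phi(u)_{(p,x)}=\Tr(pu(x))$, where $u:F^t\to F$ is affine along every direction in $D$. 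For such $u$, the map $p\mapsto\Tr(pu(x+pb))=\Tr(pA+p^2B)$ is additive, which is precisely what makes your ``pieces'' $\F_2$-linear. You propose to take these functions from a Riemann--Roch space $W$. On $W^\vee$ those are linear forms, so together with the constants they give only $t+1$ dimensions, hence $n=O(t)$ and $|S|=2^{\Theta(n)}$. The missing idea is a source of exponentially many such functions. The paper uses the maximal minors of the $k\times 3k$ matrix $\mathcal M(x)=(x(u_iz_j))$ built from multiplication $L(A)\times L(B)\to L(G)$. These minors change by a rank-one matrix along each $e_P$, so they are affine in those directions. All $\binom{3k}{k}$ of them are independent as functions: $1$-genericity plus Eisenbud gives polynomial independence, and multilinearity in a basis of evaluation functionals passes this to functions over a finite field.

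Your spectral plan is also misdirected, though less seriously. The affine pieces do occur in the paper, but they are indexed by pairs $(b,x)\in\widehat D\times F^t$, namely $\{s_{(p,x+pb)}:p\in F^\times\}$, not by directions. The $O(t)$ directions generate a Cayley factor; they are not pieces of $S$. The link is exactly the union of these cliques and equals the tensor product $K_{q^2-1}\times\Cay(F^t,\widehat D)$. Its eigenvalues come exactly from characters of $F^t$: they are $\mu(c)=1-q^2w(c)/L$ and $-\mu(c)/(q^2-2)$. The only input is $\Delta(D)/|D|\ge(q-5)/(q-2)$, which is a zero count for functions in $L(3gP_\infty)$, i.e.\ the distance of the AG code. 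No Weil bound or inter-piece incidence operator enters. The lower bound $-\lambda$ is automatic because $\mu(c)\in[-1/(q^2-1),1]$. So what you call the main obstacle is the easy part.

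Accidental relations also need not be excluded. The paper takes only the triangles given by rows of $H$. It needs row-space distance at least three only to make the generators distinct and nonzero, and constant and linear kernel vectors certify this. Finally, connected links do not imply a connected one-skeleton, and Oppenheim's descent assumes connectivity; use the spanning argument instead.
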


Our second theorem concerns weighted complexes in arbitrary dimension.

\begin{theorem}
\label{thm:main-higher-dimensional}
For every fixed $d\ge2$, there is an explicit infinite family of weighted $d$-dimensional Cayley complexes over groups $\F_2^n$, with $n\to\infty$ along the family. Each complex is simple, pure, and translation invariant, and its top-face probability distribution has full support. Its one-skeleton and all positive-dimensional links are connected. On every codimension-two link, the random-walk operator has norm at most $1/d$ on the orthogonal complement of the constants. The Cayley degree is $\Theta_d(n)$.
\end{theorem}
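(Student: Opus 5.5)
The plan is to build the complex by induction on dimension: a weighted $d$-dimensional Cayley complex over $\F_2^n$ whose links are themselves Cayley-type complexes over quotient spaces. The degree should stay linear in $n$, and the codimension-two link bound $1/d$ should come from a product/tensor structure in which the links are essentially complete partite graphs or their weighted analogues.

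\textbf{The construction.} Start from a linear Cayley expander generating set $S=\{s_1,\dots,s_m\}\subset\F_2^n$ with $m=\Theta(n)$. We can take it explicitly, for instance as the columns of a parity-check matrix of a good binary code (an explicit small-bias set), or as the two-dimensional relation-matrix structure from the earlier sections. Top faces are the translates $x+\{0,g_1,g_1+g_2,\dots,g_1+\dots+g_d\}$. Here $(g_1,\dots,g_d)$ ranges over tuples built from a fixed partition of the generator set into $d+1$ blocks, and the symmetric condition that the total sum of the $d+1$ "increments" vanishes is imposed via an extra block. Weights are chosen uniformly over admissible tuples, so the top-face distribution has full support on the defined faces.

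\textbf{Local spectra.} Translation invariance reduces every link computation to the link of $0$ and its faces. The link of a $(d-2)$-face is a weighted graph whose vertices are the possible remaining increments. I will aim to show that this graph is, after identification, a weighted complete $3$-partite-type graph or a tensor of a complete graph with a Cayley graph on the quotient. Its nontrivial spectrum is then computed with characters of $\F_2^n$. The Fourier transform reduces everything to bounds of the form $|\mathbb E_{s}\chi(s)|$, which are small by the bias of $S$. The exact complete-partite part then gives eigenvalues $-1/2$ in a raw form. To reach $1/d$, apply Garland/Oppenheim trickling down: it suffices that the top-dimensional ($1$-dimensional) links have norm $\le \frac{1}{d}$ as required directly, with connectivity ensured at each level. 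I will therefore choose the number of blocks to grow with $d$, so that the codimension-two links are complete $(d+1)$-partite-like graphs whose nontrivial eigenvalues are $-1/d$ and $0$, plus error terms controlled by bias.

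\textbf{Connectivity, degree, and the main obstacle.} Connectivity of the one-skeleton follows from $S$ spanning $\F_2^n$. Connectivity of the links follows from the partite structure, and the Cayley degree is $(d+1)m\cdot O_d(1)=\Theta_d(n)$. I expect the main obstacle to be reconciling simplicity and exact spectral control with linear degree. Random-like sparse generators produce no triangles, so faces must be forced algebraically, for example through linear relations $g_0+\dots+g_d=0$ across blocks. Yet the link graphs must still look like complete partite graphs up to $o(1)$ errors. The step I would try first is to define the blocks as images $A_i S$ under $d+1$ invertible linear maps with $\sum A_i=0$. Then each link becomes a relabeled copy of a complete multipartite graph exactly, which needs no bias error at all and yields the exact bound $1/d$.
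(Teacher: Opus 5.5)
There is a genuine gap: the proposal never produces a complex whose codimension-two links are connected, and the concrete construction you propose to try first fails. The identity $\sum_iA_i=0$ makes the closing condition $g_0+\cdots+g_d=0$ automatic only when the increments $g_i=A_is$ share a common $s\in S$. The top faces are then $T(x,s)=x+\{0,B_1s,\dots,B_ds\}$ with $B_j=A_1+\cdots+A_j$, so each top face is pinned down by a single generator and a translate.

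\begin{itemize}
\item For $d=2$, in the language of Subsection~\ref{subsec:relation-matrices}, this is a system with one relation $\{A_0s,A_1s,A_2s\}$ per $s$. Each generator lies in a single relation ($\rho=1$), and the link of $0$ is a disjoint union of triangles, which is disconnected.
\item For $d\ge3$, two vertices $y,y'$ of a $(d-2)$-face determine $s$ and the pair $\{i,j\}$ with $y+y'=(B_i+B_j)s$. So the face lies in at most the two top faces $T(y+B_is,s)$ and $T(y+B_js,s)$. Its link has at most two edges, hence is bipartite.
\end{itemize}

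Both conclusions hold unless you force the vectors $(B_i+B_j)s$ to coincide in a structured way, which you do not. In either case the norm on $\one^\perp$ is $1$, so the claim that each link becomes exactly a complete multipartite graph is false. Balanced complete $(d+1)$-partite links would indeed have nontrivial eigenvalues $-1/d$ and $0$, but nothing in the construction produces them.

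The surrounding steps do not repair this. The link of $0$ has the generator set as its vertex set, not $\F_2^n$, so the bias of $S$ does not by itself control its spectrum. For $d\ge3$, every sum of consecutive increments is an edge and must also lie in a linear-size generator set. Trickling down is unnecessary for the codimension-two bound, and it assumes, rather than proves, connectivity of the lower links.

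The missing ingredients are a source of many overlapping top faces and a mechanism that controls the negative side of the link spectra; the paper imports both. It takes Golowich's product complex $Z(G;K,s)$ with $G$ the $k$-cube, $K=2d$, and a constant number $s=2^a\ge2K$ of labels identified with $\F_2^a$. Its top faces are the $(K+1)$-subsets of $\F_2^k\times\F_2^a$ lying over a single cube edge with pairwise distinct labels. Its one-skeleton is the Cayley graph with generators $\{0,e_1,\dots,e_k\}\times(\F_2^a\setminus\{0\})$, of size $(s-1)(k+1)=\Theta_d(n)$. The paper then passes to the $d$-skeleton with the induced top-face distribution, whose link walks coincide with those of $Z$. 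Theorem~\ref{thm:golowich-product-links} at $t=d-2$ bounds the second eigenvalue by $1/d$. Each such link is the weighted one-skeleton of the $(d+1)$-dimensional complex $Z_F$, so the identity $\langle f,Pf\rangle_\pi+\frac1\ell\lVert f\rVert_\pi^2=\frac1{\ell(\ell+1)}\mathbb E_\tau\bigl(\sum_{v\in\tau}f(v)\bigr)^2$ with $\ell=d+1$ (Lemma~\ref{lem:universal-negative-eigenvalue}) bounds every eigenvalue below by $-1/(d+1)$. This is why the paper builds a $2d$-dimensional complex and takes a skeleton rather than working in dimension $d$ directly. No small-bias set is needed, and the resulting one-skeleta are not even expanders.
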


Its linear degree is optimal in order: a connected Cayley complex over $\F_2^n$ with connected vertex links has degree at least $2n-1$~\cite{DLW25}. The theorem follows by equipping a skeleton of the construction in~\cite{Gol21} with its induced top-face distribution. It remains open whether, for every fixed $d\ge3$ and $0<\lambda<1/d$, there are explicit families of abelian Cayley complexes over $\F_2^n$ with codimension-two local spectral norm at most $\lambda$ and Cayley degree polynomial in $n$.

\subsection{Organization}

We present the proof of Theorem~\ref{thm:main-two-dimensional} from the outside in, treating later results as black boxes. Section~\ref{sec:preliminaries} proves the theorem from Lemma~\ref{lem:matrix-family}, which supplies relation matrices with large binary kernels and the required real spectral bound. Section~\ref{sec:relation-matrices} proves this lemma using Lemma~\ref{lem:direction-affine-family}, which gives direction sets and many functions affine along these directions. Section~\ref{sec:ag-directions} proves the latter lemma using evaluation on a Garcia--Stichtenoth tower and maximal minors of matrices of linear forms arising from multiplication of functions. Section~\ref{sec:dimension-lift} independently proves Theorem~\ref{thm:main-higher-dimensional} using an induced weighted skeleton of a graph product. Section~\ref{sec:open-problems} lists the open questions. Appendices~\ref{app:ai-research-note} and~\ref{app:ag-background} discuss the AI contributions and research note, and review the algebraic geometry, respectively.

%% file: 02-preliminaries.tex
\section{Preliminaries}
\label{sec:preliminaries}

\subsection{Notation and conventions}

For an integer $m\ge1$, write $[m]=\{1,\ldots,m\}$.  For a prime power $q$, let $\F_q$ denote the field of $q$ elements; for a finite field $F$, define $F^\times=F\setminus\{0\}$.  If $V$ is a finite-dimensional $F$-vector space, then $V^\vee=\operatorname{Hom}_F(V,F)$; spans, kernels, ranks, and row spans are taken over $F$ unless a subscript indicates otherwise.  We write $e_i$ for a standard basis vector, $\mathbf1$ for an all-ones vector, and $d_{\min}(C)$ for the minimum Hamming distance of a binary linear code $C$. For a binary matrix $H$, let $\widetilde H$ denote the real $0/1$ matrix with the same entries.  Thus kernels and row spans of $H$ are taken over $\F_2$, whereas $\widetilde H^{\mathsf T}\widetilde H$ is the real Gram matrix; its $(i,j)$-entry counts the rows in which columns $i$ and $j$ both have entry one.

\subsection{Spectral expansion}

\paragraph{Weighted graphs and two-sided spectral expansion.} Let $G=(V,E_G,w)$ be a finite undirected graph without isolated vertices, with positive symmetric edge weights $w_{uv}=w_{vu}$.  Set $d(u)=\sum_{v:\{u,v\}\in E_G}w_{uv}$ and let $P$ be the random-walk operator, so $P(u,v)=w_{uv}/d(u)$ on edges.  Its stationary measure is $\pi(u)=d(u)/\sum_v d(v)$, and $P$ is self-adjoint for $\langle f,g\rangle_\pi=\sum_{u\in V}\pi(u)f(u)g(u)$. We call $G$ a \emph{two-sided $\lambda$-spectral expander} if it is connected and $\left\|P\bigm|_{\mathbf1^{\perp_\pi}}\right\|_\pi\leq\lambda$. For an unweighted regular graph this is the bound on the absolute value of every nontrivial normalized adjacency eigenvalue.

\paragraph{Weighted complexes and links.} A finite pure $d$-dimensional simplicial complex is a downward-closed collection $X=X(-1)\sqcup X(0)\sqcup\cdots\sqcup X(d)$ whose $i$-faces have cardinality $i+1$, and in which every face is contained in a $d$-face. Here $X(-1)=\{\varnothing\}$, and $X(0)$ consists of the singleton faces, which we identify with the vertices of $X$.  A \emph{weighted simplicial complex} is a pair $(X,\mu_d)$, where $\mu_d$ is a probability distribution with full support on $X(d)$. It induces the lower-dimensional distributions
\[
 \mu_i(\sigma)
 =
 \binom{d+1}{i+1}^{-1}
 \sum_{\substack{\tau\in X(d)\\\sigma\subseteq\tau}}
 \mu_d(\tau).
\]
Throughout, \emph{simple} means that faces occur without multiplicity. For a face $F$, its link is $X_F=\{T\setminus F:T\in X,\ F\subseteq T\}$, equipped with the top-face distribution obtained by conditioning $\mu_d$ on the event $F\subseteq T$. We call $(X,\mu_d)$ \emph{unweighted} if $\mu_d$ is uniform on $X(d)$.

We say that $(X,\mu_d)$ has \emph{two-sided codimension-two local spectral norm at most $\lambda$} if, for every $F\in X(d-2)$, the weighted link graph $X_F$ is connected and
\[
 \left\|P_F\bigm|_{\mathbf1^{\perp_{\pi_F}}}\right\|_{\pi_F}
 \le\lambda,
\]
where $P_F$ and $\pi_F$ are its random-walk operator and stationary measure. This condition applies only to codimension-two links; it does not assert the same spectral bound for links of other dimensions. Connectivity of the one-skeleton and of the remaining positive-dimensional links will be stated separately.

For $d=2$, the codimension-two faces are the vertices, so this is the usual two-sided spectral condition on the vertex links. If the one-skeleton is connected and $\lambda<1/2$, spectral descent bounds its nontrivial eigenvalues by $[-\lambda/(1+\lambda),\lambda/(1-\lambda)]$~\cite{Opp18}.

\subsection{Abelian Cayley complexes over \texorpdfstring{$\mathbb F_2^n$}{F2 to the n}}

Let $W=\mathbb F_2^n$ and let $S\subseteq W\setminus\{0\}$.  The Cayley graph $\operatorname{Cay}(W,S)$ has vertex set $W$ and edge set $\bigl\{\{x,x+s\}:x\in W,\ s\in S\bigr\}$. It is simple because $0\notin S$, and its degree is $|S|$.

The next lemma recalls the Fourier description of the Cayley graph's spectrum.

\begin{lemma}
\label{lem:cayley-character-spectrum}
Assume $S\ne\varnothing$. For $\xi\in W$, define $\chi_\xi(x)=(-1)^{\langle\xi,x\rangle}$. The characters $\{\chi_\xi:\xi\in W\}$ form an orthogonal eigenbasis for the normalized adjacency operator $P_S$ of $\operatorname{Cay}(W,S)$, and
\[
    P_S\chi_\xi
    =\left(\frac1{|S|}\sum_{s\in S}\chi_\xi(s)\right)\chi_\xi.
\]
Consequently, $\operatorname{Cay}(W,S)$ is connected if and only if $S$ spans $W$.
\end{lemma}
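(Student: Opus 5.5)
The plan is the standard Fourier computation on the abelian group $W=\mathbb F_2^n$. Fix $\xi\in W$ and evaluate $P_S\chi_\xi$ at a vertex $x$. By definition of the random-walk operator on the unweighted $|S|$-regular graph $\operatorname{Cay}(W,S)$, we have $(P_Sf)(x)=\frac1{|S|}\sum_{s\in S}f(x+s)$. Here we use that the neighbors of $x$ are exactly $x+s$ for $s\in S$. These are distinct for distinct $s$, and since $-s=s$ in characteristic two the edge relation is symmetric, so the graph is undirected and regular of degree $|S|$. Because $\chi_\xi$ is a homomorphism $W\to\{\pm1\}$, $\chi_\xi(x+s)=\chi_\xi(x)\chi_\xi(s)$. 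Factoring out $\chi_\xi(x)$ gives the displayed eigenvalue equation.

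Next, orthogonality: the stationary measure is uniform by regularity. For $\xi\ne\eta$ we have $\sum_x\chi_\xi(x)\chi_\eta(x)=\sum_x\chi_{\xi+\eta}(x)$. Pick a coordinate $i$ with $(\xi+\eta)_i=1$ and pair $x$ with $x+e_i$; the paired terms cancel, so the sum is $0$. Since there are $2^n=|W|$ nonzero, pairwise orthogonal characters, they form a basis, hence an orthogonal eigenbasis.

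For connectivity, the eigenvalue $1$ of $P_S$ has multiplicity equal to the number of connected components, a standard fact for random walks on graphs. By the formula, the eigenvalue at $\chi_\xi$ equals $1$ iff $\chi_\xi(s)=1$ for all $s\in S$, i.e.\ iff $\xi\in S^\perp$, the annihilator of $S$ under the standard bilinear form $\langle\cdot,\cdot\rangle$. Thus $\operatorname{Cay}(W,S)$ is connected iff $S^\perp=\{0\}$. Because the standard form is nondegenerate, $\dim S^\perp=n-\dim\operatorname{span}(S)$, so this holds iff $S$ spans $W$.

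Alternatively, there is a direct argument: the connected component of $0$ is exactly $\operatorname{span}(S)$, since walks from $0$ reach precisely the sums of elements of $S$. By translation invariance, all components are cosets of $\operatorname{span}(S)$. No step is a real obstacle; the only point needing care is the annihilator dimension count.
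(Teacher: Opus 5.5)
Your proposal is correct and follows the paper's proof: the eigenvalue identity comes from $\chi_\xi(x+s)=\chi_\xi(x)\chi_\xi(s)$, and the characters form the Fourier basis (you verify orthogonality explicitly, where the paper takes it as standard). Your ``alternative'' connectivity argument, that the component of $0$ is $\operatorname{span}_{\F_2}S$, is exactly the paper's argument, while your primary spectral route (eigenvalue-one multiplicity equals $|S^\perp|$, and $\dim S^\perp=n-\dim\operatorname{span}S$) is also valid but only a minor variant.
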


\begin{proof}
The eigenvalue identity follows from $\chi_\xi(x+s)=\chi_\xi(x)\chi_\xi(s)$, and the characters form the Fourier basis of $W$. The connected component of $0$ is $\operatorname{span}_{\F_2}S$, which proves the connectivity.
\end{proof}

We call a pure weighted simplicial complex a \emph{Cayley complex} over $W$ if its one-skeleton is $\operatorname{Cay}(W,S)$ and its induced edge weights depend only on the generator label.  It is \emph{translation invariant} if its top-face distribution is invariant under translations by $W$.

An indexed family of translation-invariant Cayley complexes is \emph{explicit} if a deterministic algorithm, given the family index, outputs the ambient dimension $n$, the generators in $\F_2^n$, one representative of each top-face translation orbit, and the exact rational weight of each face in that orbit, in time polynomial in $n$ for fixed parameters. Finite-field elements and rational weights are encoded in binary.

\subsection{Relation matrices}
\label{subsec:relation-matrices}

We use the quotient construction in~\cite{Gol23}. A \emph{relation matrix} is a binary matrix $H\in\F_2^{R\times M}$ whose rows specify linear relations among $M$ generators over $\F_2$. We focus on relation matrices with distinct rows of weight three and constant positive column weight $\rho$. Suppose moreover that $R_H=\operatorname{rowspan}_{\F_2}H$ has minimum distance at least three. Set $\Gamma_H=\F_2^M/R_H$ and $s_i=e_i+R_H$ for $i\in[M]$.  The distance assumption makes the $s_i$ nonzero and pairwise distinct, and they span $\Gamma_H\cong\F_2^{\dim_{\F_2}\ker H}$.  If a row of $H$ has support $\{i,j,k\}$, then $s_i+s_j+s_k=0$.  Define $X_H$ to have vertex set $\Gamma_H$, Cayley generators $S_H=\{s_i:i\in[M]\}$, and, for every such row and every $g\in\Gamma_H$, the triangle $\{g,g+s_i,g+s_i+s_j\}$. Different orderings of $i,j,k$ give the same translation orbit, since $s_i+s_j+s_k=0$. Equip the triangles with the uniform probability distribution.

Two distinct row supports cannot share a pair, since their sum would be a word of weight two in $R_H$.  It follows that $X_H$ is a simple, pure, unweighted, translation-invariant two-dimensional Cayley complex of Cayley degree $M$, and every edge lies in $2\rho$ triangles.  The link at the identity is $2\rho$-regular, with adjacency matrix $\widetilde H^{\mathsf T}\widetilde H-\rho I_M$. Thus its normalized adjacency operator is
\begin{equation}
\label{eq:matrix-link-operator}
 P_H=\frac{\widetilde H^{\mathsf T}\widetilde H-\rho I_M}{2\rho}.
\end{equation}

\begin{lemma}\cite[Lemma~45]{Gol23}
\label{lem:row-weight-three-compiler}
If the vertex link of $0$ in $X_H$ is connected and $\left\|P_H\bigm|_{\mathbf1^\perp}\right\|\leq\lambda$, then $X_H$ has two-sided codimension-two local spectral norm at most $\lambda$.
\end{lemma}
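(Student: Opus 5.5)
The proof reduces to a single link by translation invariance. The complex $X_H$ is two-dimensional, so the codimension-two faces are the vertices $g\in\Gamma_H$. Translation by $g$ is an automorphism of $X_H$: it preserves the generator set $S_H$, maps the triangle orbits to themselves, and preserves the uniform top-face distribution. It therefore induces a weight-preserving isomorphism between the link of $0$ and the link of $g$. Connectivity and the spectral norm bound are both isomorphism invariants, so it suffices to treat the link of $0$.

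Next we identify the link of $0$ with its weighted structure. Its vertices are the neighbours $s_i$, $i\in[M]$. These are pairwise distinct by the distance-three assumption on $R_H$, so the vertex set is indexed by $[M]$. The triangles containing $0$ are translates $\{g,g+s_i,g+s_i+s_j\}$ of the row triangles that contain $0$. For a row with support $\{i,j,k\}$, the triangles through $0$ are exactly $\{0,s_a,s_b\}$ for the three pairs $\{a,b\}\subseteq\{i,j,k\}$. To see this, note that $0$ can play each of the three roles, and the third vertex is $s_a+s_b=s_c$. So the link edges are the pairs $\{a,b\}$ lying in a common row support.

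Each such pair lies in exactly one row, because two row supports never share a pair. Hence all link edges carry the same weight under the conditioned uniform distribution. The link is the unweighted graph whose adjacency matrix is the off-diagonal part of $\widetilde H^{\mathsf T}\widetilde H$, namely $\widetilde H^{\mathsf T}\widetilde H-\rho I_M$. Every vertex has degree $2\rho$, since it lies in $\rho$ rows and has two partners in each. The stationary measure $\pi$ is therefore uniform, $\langle\cdot,\cdot\rangle_\pi$ is proportional to the standard inner product, and $\mathbf1^{\perp_\pi}=\mathbf1^\perp$. The random-walk operator is exactly $P_H$ as in \eqref{eq:matrix-link-operator}.

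The hypotheses then give precisely that the link $X_0$ is connected and $\|P_{0}|_{\mathbf1^{\perp_{\pi_0}}}\|_{\pi_0}\le\lambda$. Transporting this to every vertex link via translation yields the claim. The main point requiring care is the second step. We must check that the conditioned top-face distribution gives equal weights, which rests on no pair lying in two rows. We must also check that each triangle through $0$ is counted once, with no multiplicities from different orderings, which rests on $s_i+s_j+s_k=0$ and the simplicity of $X_H$.
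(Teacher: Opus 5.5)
Your proof is correct and follows the paper's own reasoning: the paragraph before the lemma notes that no two row supports share a pair, that $X_H$ is simple and translation invariant, and that the identity link is the $2\rho$-regular graph with adjacency matrix $\widetilde H^{\mathsf T}\widetilde H-\rho I_M$, after which the paper simply cites \cite[Lemma~45]{Gol23}. You also write out the two steps the citation covers: moving the link of $0$ to every vertex link by translation, and noting that regularity makes $\pi$ uniform, so $\mathbf1^{\perp_\pi}=\mathbf1^\perp$ and the $\pi$-norm matches the standard one.
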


\paragraph{Matrices for the construction.} The following lemma supplies the relation matrices used to prove Theorem~\ref{thm:main-two-dimensional}.

\begin{lemma}
\label{lem:matrix-family}
For every fixed $\lambda>0$, choose $q=2^h\ge8$ with $4/(q-2)<\lambda$. For arbitrarily large integers $t$, one can explicitly construct a binary matrix $H\in\F_2^{R\times M}$, where $M=(q^2-1)q^{2t}$, $R=\rho M/3$, and $\rho=O_\lambda(t)$, with the following properties.
\begin{enumerate}[label=\textup{(\roman*)}]
\item The rows are distinct and have weight three, every column has weight $\rho>0$, and $\operatorname{rowspan}_{\F_2}H$ has minimum distance at least three.
\item Writing $n=\dim_{\F_2}\ker H$, one has
  \begin{equation}
   n\ge 2h\,2^{t/3}.
   \label{eq:matrix-family-nullity}
  \end{equation}
\item The operator $P_H$ in \eqref{eq:matrix-link-operator} is the normalized adjacency of a connected simple regular graph and satisfies
  \[
   \left\|P_H\bigm|_{\one^\perp}\right\|\le\lambda.
  \]
\end{enumerate}
\end{lemma}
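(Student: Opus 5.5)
The plan is to assemble $H$ directly from the output of Lemma~\ref{lem:direction-affine-family}, reading the three properties off its structure. I expect that lemma to supply, for the fixed $q=2^h$ and each large $t$, two things: a set $\mathcal D$ of $\rho'=O(t)$ projective directions in an affine space $\mathbb A$ over $\F_q$ with $|\mathbb A|=q^{2t}$, and an $\F_2$-space $\mathcal F$ of functions $\mathbb A\to\F_q^2$ affine along every line with direction in $\mathcal D$, with $\dim_{\F_2}\mathcal F\ge 2h\,2^{t/3}$.

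I index the columns by pairs $(p,w)$ with $p\in\mathbb A$ and $w\in\F_q^2\setminus\{0\}$. This gives exactly $M=(q^2-1)q^{2t}$ columns. For every direction $\delta\in\mathcal D$, every line $\ell$ in direction $\delta$, and suitable local data, I add a row of weight three of the form $\{(p_1,w_1),(p_2,w_2),(p_3,w_3)\}$. Here the $p_i$ are three points of $\ell$, and the $w_i$ are nonzero with $w_1+w_2+w_3=0$ in characteristic two, chosen so that an affine function along $\ell$ yields a valid $\F_2$-relation. The rows are designed so that each column meets exactly one row per direction per ``slot'', giving constant column weight $\rho=c\,\rho'=O_\lambda(t)$.

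Property~(i) then reduces to combinatorics. Distinctness and the no-shared-pair condition hold because two points together with a direction determine the line, and because $w_3=w_1+w_2$. Minimum distance at least three in the row span requires that no single column and no pair of columns be a sum of rows. I will derive this from local injectivity: the rows through a fixed line form a nondegenerate incidence structure whose row space has distance at least three, and rows on distinct lines share at most one column.

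For property~(ii), I map each $f\in\mathcal F$ to a vector $v_f\in\F_2^M$, for instance $v_f(p,w)=\Tr\langle f(p),w\rangle$ (or an analogous pairing). Affinity of $f$ along each line makes every row relation vanish, so $v_f\in\ker H$. The map $f\mapsto v_f$ is injective by nondegeneracy of the trace pairing, which gives $n\ge\dim\mathcal F\ge 2h\,2^{t/3}$.

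The main obstacle is property~(iii). I would write
\[
\widetilde H^{\mathsf T}\widetilde H-\rho I=\sum_{\delta\in\mathcal D}A_\delta,
\]
where $A_\delta$ is the adjacency contribution of the rows in direction $\delta$. Each $A_\delta$ is a disjoint union over lines of copies of a fixed small graph on (points of a line)$\times(\F_q^2\setminus\{0\})$, essentially the graph joining $w$ to $w'$ whenever $w\ne w'$. Its nontrivial spectrum is of order $1/(q-2)$ relative to its degree.

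What I would try first is to show that the $A_\delta$ essentially decouple. The $\F_q^2$-coordinate is shared, so I decompose the space as constants in $w$ plus the orthogonal complement in $w$. On the complement, each $A_\delta$ acts with norm at most about $2/(q-2)$ times its degree, and the triangle inequality over $\delta$ gives $\|P_H\|\le 4/(q-2)<\lambda$ there. On functions constant in $w$, $P_H$ reduces to a walk on $\mathbb A$ along the lines of $\mathcal D$, and its norm off the constants must also be controlled. If the direction-set lemma guarantees that $\mathcal D$ spans and is well spread, this would follow from a Fourier (character-sum) argument on $\mathbb A$. If that part does not come out cleanly, I would instead build the rows so that each row mixes the $w$-coordinate nontrivially, which kills the $w$-constant sector entirely.

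Connectivity of the link graph follows once the nontrivial norm is below $1$. Simplicity and regularity follow from~(i).
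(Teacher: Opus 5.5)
Your outline has the right overall shape: columns indexed by a nonzero label and a point, rows along lines in the directions of $D$, a trace pairing into $\ker H$, and a split into label-constant and label-nonconstant functions. But the steps that carry the proof are either left unspecified or argued incorrectly.

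The first gap is that the rows are never defined, and their definition is the key idea. In the $\F_q$-linear reading of your setup (labels $w\in\F_q^2$, lines $x+\F_q a$, values in $\F_q^2$), consider a row $\{(x+\alpha_i a,w_i)\}_{i=1}^3$. It is annihilated by the images of all constant and all linear maps only if $\sum_i w_i=0$ and $\sum_i\alpha_i w_i=0$. For distinct $\alpha_i$ this forces $w_1,w_2,w_3$ to be $\F_q$-proportional. Then every edge preserves the line $\F_q^\times w$, and the link graph has at least $q+1$ components. The paper instead works over the field $F=\F_{q^2}$ of Lemma~\ref{lem:direction-affine-family}, with $V_0=F^t$ and labels in $F^\times$, and makes the label itself the line parameter. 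The rows are $\{(p,x+pb):p\in\ell\}$ for $b\in\widehat D$, $x\in V_0$, and $\ell=\{p,r,p+r\}$ with $p\ne r$ in $F^\times$. If $u$ is affine along $b$, the parity of $\Tr(p\,u(x))$ on such a row is $\Tr(A\sum_{s\in\ell}s+B\sum_{s\in\ell}s^2)=0$, because squaring is additive in characteristic two.

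Your minimum-distance argument also fails. Distance is a global property of the row space and does not follow from local nondegeneracy together with pairwise intersections of size at most one. The twelve lines of $AG(2,3)$ satisfy both conditions, yet over $\F_2$ the four lines through a point plus one parallel class sum to a unit vector. The paper argues dually: constant and globally linear functions lie in $\cA(D)$, and their images in $\ker H$ separate every coordinate from zero and every pair of coordinates from each other.

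For (iii), the label-constant sector is the heart of the matter, and no choice of labels can remove it, contrary to your fallback. Take $c\ne0$, and let $\theta_c$ be the fraction of edges whose direction lies in $\ker c$. The lift of the character $x\mapsto(-1)^{\Tr(c(x))}$ is orthogonal to $\one$ and has Rayleigh quotient at least $2\theta_c-1$, since each such edge contributes $+1$. So you need exactly the quantitative spread $\Delta(D)/|D|\ge(q-5)/(q-2)$ that Lemma~\ref{lem:direction-affine-family} provides, which your description of that lemma omits. This spread must feed an exact computation, not a per-direction triangle inequality. Your bound of about $2/(q-2)$ on the label-nonconstant part is unproved, and for an unspecified design that part need not even be invariant.

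With the coupled rows above, and since $\widehat D$ is closed under $F^\times$, the columns $(p,x)$ and $(r,y)$ are adjacent exactly when $p\ne r$ and $x+y\in\widehat D$. So the graph is the tensor product $K_{q^2-1}\times\Cay(V_0,\widehat D)$. Characters give the exact eigenvalues $\mu(c)=1-q^2w(c)/L$ and $-\mu(c)/(q^2-2)$, where $w(c)$ counts the directions outside $\ker c$ and $L=(q^2-1)|D|$. These yield the norm bound $\max\{3/(q-2),1/(q^2-2)\}<\lambda$. Since the eigenvalue one is then simple, connectivity follows as well.
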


The proof is postponed to Subsection~\ref{proof in 2}.

\subsection{Proof of Theorem~\ref{thm:main-two-dimensional}}

\begin{proof}
Apply Lemma~\ref{lem:matrix-family}, fix one of its matrices $H$, and apply the relation-matrix construction above.  Denote the resulting complex by $X$, its group by $\Gamma=\F_2^M/\operatorname{rowspan}_{\F_2}H$, and its generator set by $S$. Then $\dim_{\F_2}\Gamma=n$, the complex is simple, its Cayley degree is $M$, and its vertex-link operator is $P_H$. The generators span $\Gamma$, so its one-skeleton is connected. Lemma~\ref{lem:row-weight-three-compiler} gives the required vertex-link spectral bound. The nullity bound in \eqref{eq:matrix-family-nullity} gives
\begin{equation}
 |S|=M
 \le(q^2-1)\left(\frac{n}{2h}\right)^{6h}
 =n^{O_\lambda(1)}.
 \label{eq:main-degree}
\end{equation}
Choosing the least admissible $q$ gives $h=O(\log(2+1/\lambda))$, so the exponent in \eqref{eq:main-degree} is $O(\log(2+1/\lambda))$. The nullity bound also gives $t=O_\lambda(\log n)$, so the number $2\rho$ of triangles containing each edge is logarithmic.

Let $B\in\F_2^{n\times M}$ have as its rows a basis of $\ker_{\F_2}H$. Then $\ker B=\operatorname{rowspan}_{\F_2}H$, so $v+\operatorname{rowspan}_{\F_2}H\mapsto Bv$ identifies $\Gamma$ with $\F_2^n$. Under this identification, the generators are the columns of $B$, and the uniform measure, degrees, and spectra are unchanged.

Gaussian elimination computes $B$.  Moreover, $R=\rho M/3=n^{O_\lambda(1)}$, and the rows of $H$ give the triangle-orbit representatives.  Every such orbit has size $2^n$: a nonzero translation in an elementary abelian $2$-group cannot stabilize a set of three vertices.  The three edge differences of a triangle are $s_i,s_j,s_k$; since the generators are distinct, they recover the row support. Thus distinct rows give distinct orbits, and every triangle has weight $1/(R2^n)$.  Letting $t$ range over the arbitrarily large values supplied by Lemma~\ref{lem:matrix-family} gives the required explicit infinite family.
\end{proof}

%% file: 03-relation-matrices.tex
\section{Relation matrices from affine restrictions}
\label{sec:relation-matrices}

We prove Lemma~\ref{lem:matrix-family} by constructing a binary relation matrix from the projective directions in Lemma~\ref{lem:direction-affine-family}. Throughout this section, $F=\F_{q^2}$, where $q=2^h\ge8$. We write $\Tr=\Tr_{F/\F_2}$ for the field trace.

\subsection{Functions affine in prescribed directions}
\label{subsec:direction-affine-input}

We identify $PG(t-1,F)=(F^t\setminus\{0\})/F^\times$ and write $[a]=F^\times a$. Thus $[a]$ records only the direction of $a$, identifying all its nonzero scalar multiples. For $V_0=F^t$, a set $D\subseteq PG(t-1,F)$ is \emph{spanning} if its representatives span $V_0$. Write
\[
 \Delta(D)
 =
 \min_{0\ne c\in V_0^\vee}
 |\{[a]\in D:c(a)\ne0\}|.
\]
For $a\in V_0\setminus\{0\}$, say that $f:V_0\to F$ is \emph{affine in direction $[a]$} if, for every $x\in V_0$, there exist $A,B\in F$ such that
\[
 f(x+\alpha a)=A+B\alpha
 \qquad\text{for every }\alpha\in F.
\]
Replacing $a$ by $ca$ for any $c\in F^\times$ does not change this condition, so it depends only on $[a]$.  Define $\cA(D)$ to be the set of functions affine in every direction in $D$, which is an $F$-linear subspace of $F^{V_0}$.

\begin{example}\label{ex:two-dimensional-direction-affine}
Take $V_0=F^2$, with coordinates $(u,v)$, and assume $|F|>2$. For $D=\{[e_1],[e_2]\}$, we have $\Delta(D)=1$ and
\[
 \cA(D)
 =\{(u,v)\mapsto c_0+c_1u+c_2v+c_{12}uv:
     c_0,c_1,c_2,c_{12}\in F\}.
\]
Thus the two axis directions allow the mixed term $uv$. If $D'=D\cup\{[e_1+e_2]\}$, then $\Delta(D')=2$, while on a diagonal line
\[
 (u+\alpha)(v+\alpha)
 =uv+\alpha(u+v)+\alpha^2,
\]
so the diagonal direction excludes $uv$, and $\cA(D')=\{(u,v)\mapsto c_0+c_1u+c_2v\}$.

Over $\mathbb R$, the analogous condition is $(a\mathbin{\cdot}\nabla)^2f=0$. The axis conditions permit $uv$, whereas the diagonal condition excludes it since $(\partial_u+\partial_v)^2(uv)=2$. In characteristic two, restrictions to lines replace this derivative test.
\end{example}

The next lemma gives explicit sets of $O_q(t)$ directions with a lower bound on $\Delta(D)/|D|$ and exponentially many linearly independent functions in $\cA(D)$.

\begin{lemma}
\label{lem:direction-affine-family}
Let $q=2^h\ge8$ be fixed and let $F=\F_{q^2}$. For arbitrarily large integers $t$, one can explicitly construct a spanning set $D\subseteq PG(t-1,F)$ such that, writing $\nu=|D|$ and $\Delta=\Delta(D)$,
\begin{equation}
 \nu=O_q(t),
 \qquad
 \frac{\Delta}{\nu}
 \ge \frac{q-5}{q-2},
 \qquad
 \dim_F\cA(D)\ge 2^{t/3}.
 \label{eq:uniform-direction-affine-input}
\end{equation}
\end{lemma}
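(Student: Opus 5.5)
The plan is to realize $V_0$ as the dual of a Riemann--Roch space on a curve with many rational points, so that directions correspond to evaluation functionals. Then $\Delta(D)$ becomes the minimum distance of an algebraic-geometry code, and functions affine along all directions come from algebraic identities among products of functions on the curve.

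\textbf{Step 1 (curve and directions).} Take a curve $\mathcal X/F$, $F=\F_{q^2}$, from the Garcia--Stichtenoth tower. It has genus $g$ and $N\ge (q-1)g$ rational points $P_1,\dots,P_N$, and the levels of the tower give arbitrarily large $g$. Choose a divisor $G$ supported away from the $P_i$ with $\deg G<N$, and set $L=L(G)$, $t=\dim L\ge \deg G+1-g$, and $V_0=L^\vee\cong F^t$. Let $a_i=\mathrm{ev}_{P_i}\in V_0$ and $D=\{[a_i]\}$; the evaluation functionals along the tower can be computed explicitly.

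\textbf{Step 2 ($\nu$ and $\Delta$).} Since $V_0^\vee=L$, a nonzero $c\in V_0^\vee$ is a function $f\in L$ with $c(a_i)=f(P_i)$. Hence $|\{[a]\in D: c(a)\ne0\}|$ is the Hamming weight of the codeword $(f(P_i))_i$, which is at least $N-\deg G$. The same bound shows $D$ is spanning, because the evaluation map is injective when $\deg G<N$. Possibly several $P_i$ give the same direction, but only if $f(P_i)=0\Leftrightarrow f(P_j)=0$ for all $f\in L$; I expect a mild degree condition on $G$ rules this out, or else merging such points only improves the ratio. We get $\nu\le N$, $\Delta/\nu\ge 1-\deg G/N$, and $t=\Theta(N/q)$ if $\deg G$ is chosen about $3N/(q-2)$. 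This gives both $\nu=O_q(t)$ and $\Delta/\nu\ge 1-3/(q-2)=(q-5)/(q-2)$. The remaining slack from $N\ge(q-1)g$ is used to keep $t\ge\deg G+1-g$ linear in $N$.

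\textbf{Step 3 (affine functions).} The key identity: for $Q\in\mathrm{Sym}^2 L$,
\[
Q(x+\alpha a)=Q(x)+\alpha B_Q(x,a)+\alpha^2 Q(a),
\]
and $Q(a_i)$ equals the image of $Q$ under multiplication $\mathrm{Sym}^2L\to L(2G)$, evaluated at $P_i$. More generally, let $\Phi$ be multilinear in several Frobenius twists $x^{(2^{j})}$ with distinct $j<2h$. Then $f(x+\alpha a)$ expands over subsets $S$ of slots with distinct exponents $\alpha^{\sum_{j\in S}2^j}$, because binary expansions are unique below $q^2$. Thus $f$ is affine in direction $[a_i]$ exactly when every contraction of $\Phi$ with $a_i$ in two or more slots vanishes. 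I would build such $\Phi$ as maximal minors of matrices of linear forms: split $G=G_1+G_2$, take the matrix of the multiplication pairing $L(G_1)\times L(G_2)\to L(G)$ written as linear forms on $V_0$, and use determinantal functions whose rows come from twisted copies. Evaluating at a point direction makes the matrix rank one, so any minor with two rows contracted against $a_i$ vanishes, as for alternating forms. Products of such minors over disjoint blocks of coordinates, or minors indexed by subsets of a large index set, should then give exponentially many linearly independent members of $\cA(D)$. Linear independence would be certified by distinct leading monomials.

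\textbf{Main obstacle.} The main difficulty is Step 3's count $\dim_F\cA(D)\ge 2^{t/3}$. Quadratic forms or a fixed number of Frobenius twists only give polynomially many functions. So the exponential supply must come from choosing exponentially many maximal minors, roughly $\binom{\Theta(t)}{\Theta(t)}$, of a matrix of linear forms with rank one at each $a_i$, and then proving their independence. I would first try the matrix whose columns are indexed by a basis of $L(G_2)$ of dimension about $t/3$. I would then check, via a triangularity argument in a monomial order adapted to a basis with distinct pole orders at a place at infinity, that its $2^{t/3}$ row-subset minors are independent.
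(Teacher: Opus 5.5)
Your Steps 1--2 and the functions you settle on in Step 3 are essentially the paper's construction. The directions are evaluation functionals of $L(G)$ on a Garcia--Stichtenoth curve, $\Delta$ is bounded by the AG-code distance, and the functions are the maximal minors $p_J$ of $\mathcal M(x)=(x(u_iz_j))$, which change by a rank-one matrix along each $e_P$.

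Your independence argument takes a genuinely different route, and it is correct at the level of polynomials. Take $G=m_0P_\infty$ and let $u_i,z_j$ have strictly increasing pole orders $a_i,b_j$. Use coordinates $X_s=x(w_s)$, where $w_s$ is a basis of $W$ and $w_s$ has pole order $s$. Then entry $(i,j)$ is a nonzero multiple of $X_{a_i+b_j}$ plus variables of smaller index. Hence $X_{a_k+b_{j_k}}$ occurs only in the corner of $\mathcal M_J$. Induction then shows that the lex-leading monomial of $p_J$ is $\prod_iX_{a_i+b_{j_i}}$, and $J$ can be recovered from this monomial. This elementary triangularity argument replaces the paper's use of $1$-genericity, the height $m-k+1$ of the maximal-minor ideal, and Eagon--Northcott minimality.

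The genuine gap is the passage from polynomials to functions. $\dim_F\cA(D)$ counts functions $V_0\to F$, but distinct leading monomials only prove that the $p_J$ are independent as polynomials. Since $\deg p_J=k=\Theta(t)$ eventually exceeds $q^2$, a single variable can occur to a power at least $q^2$. A nonzero combination could then vanish on all of $F^t$, as $X^{q^2}-X$ does. So your certificate by itself gives no lower bound on $\dim_F\cA(D)$.

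The paper closes this step using the rank-one structure you already have. Write $x=\sum_\ell c_\ell e_{R_\ell}$ in a basis of evaluation functionals, which exists because $D$ spans. Then $\mathcal M(x)=\mathbf U\operatorname{diag}(c)\mathbf Z^{\mathsf T}$ with $\mathbf U=(u_i(R_\ell))$ and $\mathbf Z=(z_j(R_\ell))$. By Cauchy--Binet, every combination of the $p_J$ is multilinear in $c$. A multilinear polynomial that vanishes on $\{0,1\}^t$ is zero, so function independence follows from polynomial independence.

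Three smaller points also need repair.

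First, your criterion for twisted $\Phi$ is wrong. A single twisted slot already contributes $\alpha^{2^j}$ with $j\ge1$, which is not $F$-affine, so it is not enough that contractions in two or more slots vanish. Likewise, a product of minors generally restricts to a quadratic in $\alpha$, because each $e_P$ moves every block of coordinates. Drop both devices and use the untwisted minors.

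Second, merging coincident directions is not shown to help: after merging you only get $\Delta/\nu\ge1-\deg G/\nu$ with $\nu<N$. Instead, take $G$ effective, so that $1\in L(G)$ forces proportional evaluation functionals to be equal. Also take $\deg G\ge 2g+1$, so that Riemann--Roch separates any two points.

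Third, the shape you propose, $t/3$ columns and at least as many rows, is not available from Riemann--Roch. Once both degrees exceed $2g-2$, $\dim L(G_1)+\dim L(G_2)=t+1-g$. With your choice $\deg G\approx 3N/(q-2)$ and $N/g\to q-1$, one has $t\lesssim 2.5g$, so $t+1-g<2t/3$. You need an unbalanced shape, such as the paper's $k\times 3k$ with $k=\lfloor g/4\rfloor$, $\deg G=3g$, and $t=2g+1$. There $\binom{3k}{k}\ge(27/4)^k/(3k+1)\ge 2^{t/3}$ for large $g$.
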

The proof is postponed to Subsection~\ref{proof in 3}.

\subsection{The relation matrix construction}
\label{subsec:cone-line-relation-matrix}

Fix $F=\F_{q^2}$ as above, and set $V_0=F^t$.  Consider the triples $\mathcal L_F=\bigl\{\{p,r,p+r\}:p,r\in F^\times,\ p\ne r\bigr\}$.

Let $D\subseteq PG(t-1,F)$ be spanning.  Write $\nu=|D|$, $\Delta=\Delta(D)$, and $\widehat D=\{b\in V_0\setminus\{0\}:[b]\in D\}$. Thus $\widehat D$ is the set of all nonzero representatives of the points of $D$, with $L:=|\widehat D|=(q^2-1)\nu$ and $M:=(q^2-1)q^{2t}$. Index the columns by $\Omega=F^\times\times V_0$.  For every $b\in\widehat D$, $x\in V_0$, and $\ell\in\mathcal L_F$, include the row whose support is $T(b,x,\ell)=\{(p,x+pb):p\in\ell\}$.  Let $H=H_D$ be the resulting binary matrix. The set $\widehat D$ is invariant under multiplication by $F^\times$; we use this invariance to compute the spectrum below.

\begin{example}
\label{ex:toy-compiler}
Take the smallest allowed parameter $q=8$, let $F=\F_{64}$, choose $\omega\in F\setminus\F_2$, take $t=1$, and set $D=\{[1]\}\subseteq PG(0,F)$.  Then $\widehat D=F^\times$ and $\Omega=F^\times\times F$.  Choose $\ell=\{1,\omega,1+\omega\}$, the direction $b=1$, and the base point $x=1$.  The matrix $H_D$ contains the row
\[
 T(1,1,\ell)
 =\{(1,0),(\omega,1+\omega),(1+\omega,\omega)\}.
\]
This incidence row imposes a relation among the images of the corresponding standard basis vectors in the quotient.  If $s_{(p,y)}$ denotes the class of the corresponding coordinate vector in $\Gamma=\F_2^\Omega/\operatorname{rowspan}_{\F_2}H$, then
\[
 s_{(1,0)}+s_{(\omega,1+\omega)}
 +s_{(1+\omega,\omega)}=0.
\]
Once Lemma~\ref{lem:compiler-kernel} shows that these generators are nonzero and distinct, this row gives the triangle $\{g,g+s_{(1,0)},g+s_{(\omega,1+\omega)}\}$ for every $g\in\Gamma$.

\end{example}

\begin{remark}\label{rem:labels-versus-generators}
The three column labels in $T(b,x,\ell)$ sum to $(0,x)$ in $F\times V_0$, not necessarily zero. It is the corresponding generators in $\Gamma$ that sum to zero, by the relation imposed by this row of $H$.
\end{remark}

\paragraph{Row and column weights.} We first show that each pair of columns occurs together in at most one row, then count the rows containing each column. It follows that $\widetilde H_D^{\mathsf T}\widetilde H_D-\rho I$ is the adjacency matrix of a simple regular graph.

\begin{lemma}
\label{lem:compiler-combinatorics}
The matrix $H_D$ defined above belongs to $\F_2^{R\times M}$, where $R=(q^2-2)LM/6$.  Its rows are distinct and have weight three, every column has weight $\rho=(q^2-2)L/2$, and no pair of columns occurs together in more than one row.  Consequently, $\widetilde H_D^{\mathsf T}\widetilde H_D-\rho I$ is the adjacency matrix of a simple $2\rho$-regular graph, and $P_D:=P_{H_D}$, as defined in \eqref{eq:matrix-link-operator}, is its normalized adjacency operator.
\end{lemma}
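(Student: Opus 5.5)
Each row $T(b,x,\ell)$ is a set of $(p,y)$ pairs. The plan is to read off from a single row label $(p,y)$ together with a partner label $(p',y')$ the unique parameters $(b,x,\ell)$ that produce a row containing both. All the counting claims then follow.

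\emph{Row weight.} In $T(b,x,\ell)$ the first coordinates are the three distinct elements of $\ell$, so the row has exactly three elements.

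\emph{Pairs determine the row.} Suppose two distinct columns $(p,y)$ and $(p',y')$ lie in a common row $T(b,x,\ell)$. They must have $p\neq p'$, since the first coordinates within a row are distinct. From $y=x+pb$ and $y'=x+p'b$ we get $b=(y+y')/(p+p')$, using $p+p'\neq0$ in characteristic two. Then $x=y+pb$. The third element of $\ell$ is forced to be $p+p'$, because $\ell$ contains $p$ and $p'$ and has the form $\{p,r,p+r\}$. So $(b,x,\ell)$ is determined by the pair. This gives both conclusions at once: no pair of columns occurs in more than one row, and distinct parameter triples give distinct row supports. Indeed, two parameter triples with the same support share a pair and hence coincide. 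In particular the rows, indexed by $(b,x,\ell)$, are distinct.

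\emph{Column weight.} The rows containing a fixed column $(p,y)$ are parametrized by:
\begin{itemize}
\item a choice of $b\in\widehat D$, giving $L$ options;
\item a triple $\ell\in\mathcal L_F$ containing $p$, together with the forced value $x=y+pb$.
\end{itemize}
The triples containing $p$ are $\{p,r,p+r\}$ with $r\notin\{0,p\}$. Each such triple arises from two values of $r$, so there are $(q^2-2)/2$ of them. Hence $\rho=L(q^2-2)/2$.

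\emph{Row count.} Double counting incidences gives $3R=\rho M$, so $R=(q^2-2)LM/6$. As a check, the direct count is $|\widehat D|\,|V_0|\,|\mathcal L_F|=L\,q^{2t}(q^2-1)(q^2-2)/6$, which agrees since $M=(q^2-1)q^{2t}$.

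\emph{Gram matrix.} The $(i,j)$ entry of $\widetilde H^{\mathsf T}\widetilde H$ counts the common rows of columns $i$ and $j$. Off the diagonal this entry is $0$ or $1$ by the pair-uniqueness above, and on the diagonal it equals $\rho$. So $\widetilde H^{\mathsf T}\widetilde H-\rho I$ is a simple $0/1$ symmetric adjacency matrix. Each column's $\rho$ rows contribute two neighbours each, and these neighbours are distinct across rows again by pair-uniqueness. The graph is therefore $2\rho$-regular, and \eqref{eq:matrix-link-operator} is its normalized adjacency.

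Everything here is routine. The only point that needs care is the pair-uniqueness step, specifically the inversion $b=(y+y')/(p+p')$. That inversion is exactly what makes both distinctness and simplicity go through.
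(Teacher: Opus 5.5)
Your proposal is correct and follows essentially the same route as the paper: you recover $(b,x,\ell)$ from any pair of columns via $b=(p+p')^{-1}(y+y')$, count $L\cdot(q^2-2)/2$ rows through each column, double count to obtain $R$, and read simplicity and $2\rho$-regularity off the Gram matrix. Your explicit injectivity check for $(b,x,\ell)\mapsto T(b,x,\ell)$ and your direct row count $L\,q^{2t}(q^2-1)(q^2-2)/6$ are harmless additions that the paper leaves implicit.
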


\begin{proof}
Each relation is a triple.  If a row contains $(p,y)$ and $(r,z)$, where $p\ne r$, then $b=(p+r)^{-1}(y+z)$.  The first coordinates $p$ and $r$ determine $\ell$, and then $x=y+pb$.  Hence a pair of coordinates determines its row; in particular, no pair occurs in two rows and all rows are distinct.  For a fixed coordinate $(p,y)$, there are $L$ choices of $b$ and $(q^2-2)/2$ triples $\ell\in\mathcal L_F$ containing $p$, since the $q^2-2$ choices of $r\ne0,p$ count each such triple twice. Thus its column weight is $\rho=(q^2-2)L/2$, and double counting incidences gives $R=\rho M/3=(q^2-2)LM/6$.

The preceding uniqueness makes every off-diagonal entry of $\widetilde H_D^{\mathsf T}\widetilde H_D$ either zero or one.  Each of the $\rho$ triples through a coordinate contributes two distinct neighbors, so the resulting graph is simple and $2\rho$-regular. Dividing its adjacency matrix by $2\rho=(q^2-2)L$ gives $P_D$.
\end{proof}

\paragraph{Kernel and minimum distance.} We next obtain binary kernel vectors by evaluating functions in $\cA(D)$ on $\Omega$ and applying the field trace.

\begin{lemma}
\label{lem:compiler-kernel}
The map
\[
 \Phi:\cA(D)\longrightarrow\F_2^\Omega,
 \qquad
 \Phi(u)_{(p,x)}
 =\Tr\bigl(pu(x)\bigr)
\]
is an injective $\F_2$-linear map into $\ker H_D$, so $\dim_{\F_2}\ker H_D\ge 2h\dim_F\cA(D)$. The binary row space of $H_D$ has minimum distance at least three.
\end{lemma}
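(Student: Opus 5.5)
We prove the three claims of Lemma~\ref{lem:compiler-kernel} in order: that $\Phi$ lands in $\ker H_D$, that it is injective (which yields the dimension bound), and that the row space has minimum distance at least three.

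\emph{Step 1: $\Phi(u)\in\ker H_D$.} Fix a row $T(b,x,\ell)$ with $\ell=\{p,r,p+r\}$. Since $u$ is affine in direction $[b]$, there are $A,B\in F$ with $u(x+\alpha b)=A+B\alpha$ for all $\alpha\in F$. Summing $\Tr$ over the three columns of the row, and using additivity of the trace, the sum equals
\[
 \Tr\Bigl(\sum_{s\in\ell}s\,(A+Bs)\Bigr)
 =\Tr\bigl(A(p+r+(p+r))\bigr)+\Tr\bigl(B(p^2+r^2+(p+r)^2)\bigr).
\]
Both inner expressions vanish in characteristic two, since $(p+r)^2=p^2+r^2$. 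Hence every row of $H_D$ annihilates $\Phi(u)$. The map $\Phi$ is $\F_2$-linear because $\Tr$ is.

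\emph{Step 2: injectivity.} Suppose $\Phi(u)=0$ and fix $x\in V_0$. Then $\Tr(p\,u(x))=0$ for all $p\in F^\times$, and trivially also for $p=0$. The trace form is nondegenerate, so $u(x)=0$. As $x$ was arbitrary, $u=0$. Since $\dim_{\F_2}\cA(D)=2h\dim_F\cA(D)$ (recall $|F|=2^{2h}$), we obtain $\dim_{\F_2}\ker H_D\ge 2h\dim_F\cA(D)$.

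\emph{Step 3: minimum distance.} This is the only step that uses structure beyond the definitions, and it needs specific elements of $\cA(D)$.

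We first note that the constants and the linear functionals $c\in V_0^\vee$ are affine in every direction, so they lie in $\cA(D)$.

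The row space $\operatorname{rowspan}_{\F_2}H_D$ is the annihilator of $\ker H_D$, so a vector $w$ lies in it only if $w\cdot\Phi(u)=0$ for every $u\in\cA(D)$. We rule out weights one and two.

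\emph{Weight one.} Suppose $w=e_{(p,y)}$. Taking $u$ to be a constant $\beta$ gives $\Tr(p\beta)=0$ for all $\beta\in F$. Since $p\neq0$, this is impossible.

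\emph{Weight two.} Suppose $w=e_{(p,y)}+e_{(r,z)}$ with the two coordinates distinct. Taking $u=\beta$ constant gives $\Tr((p+r)\beta)=0$ for all $\beta$, so $p=r$. Then $y\neq z$. Choose $c\in V_0^\vee$ with $c(y)\neq c(z)$, and take $u=\beta c$. This gives $\Tr\bigl(p\beta(c(y)+c(z))\bigr)=0$ for all $\beta$, which is a contradiction since $p\,(c(y)+c(z))\neq0$.

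So no vector of weight one or two lies in the row space, and its minimum distance is at least three. No serious obstacle is expected; the only care needed is the characteristic-two identity in Step 1, which is why $\ell$ is taken to be a triple closed under addition.
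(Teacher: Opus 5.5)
Your proposal is correct and follows essentially the same route as the paper. The row parity reduces to $\Tr\bigl(A\sum_{s\in\ell}s+B\sum_{s\in\ell}s^2\bigr)=0$, injectivity comes from nondegeneracy of the trace pairing, and weights one and two in the row space are excluded by testing against constants and $F$-multiples of global linear functionals in $\cA(D)$. You phrase the separation step as a contradiction over all $\beta\in F$, whereas the paper fixes a single element with trace one.
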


\begin{proof}
For $u\in\cA(D)$, write its restriction in direction $b$ as $u(x+\alpha b)=A+B\alpha$. Every $\ell\in\mathcal L_F$ satisfies $\sum_{s\in\ell}s=\sum_{s\in\ell}s^2=0$.  Hence the row parity is
\[
 \Tr\left(
 A\sum_{s\in\ell}s+B\sum_{s\in\ell}s^2\right)=0.
\]
Thus $\Phi(u)\in\ker H$.  If $\Phi(u)=0$, then for every $x\in V_0$ the trace pairing annihilates $pu(x)$ for all $p\in F^\times$; nondegeneracy gives $u(x)=0$.  Therefore $\Phi$ is injective and $\dim_{\F_2}\ker H\ge 2h\dim_F\cA(D)$.

For any $(p,x)$, a suitable constant function has nonzero image at that coordinate: by nondegeneracy of the trace pairing, choose $C\in F$ with $\Tr(pC)=1$.  If $p\ne r$, choose instead $C$ with $\Tr((p+r)C)=1$; the constant function $C$ separates $(p,x)$ and $(r,y)$.  If the first coordinate is $p$ in both cases but $x\ne y$, choose an $F$-linear functional $a:V_0\to F$ such that $\Tr(pa(x+y))=1$. The globally linear function $a$ lies in $\cA(D)$, so its image separates $(p,x)$ and $(p,y)$.  A vector of weight one in the row space would force every kernel word to vanish at one coordinate, while a vector of weight two would force every kernel word to agree at two coordinates.  Both conclusions contradict the separation properties above.  Hence the row space has minimum distance at least three, since $\ker H_D=(\operatorname{rowspan}_{\F_2}H_D)^\perp$.
\end{proof}

\paragraph{Spectrum.} The graph with adjacency matrix $\widetilde H_D^{\mathsf T}\widetilde H_D-\rho I$ is a categorical product of a complete graph and a Cayley graph. We compute the spectrum from the eigenvalues of these two factors.

\begin{lemma}
\label{lem:compiler-spectrum}
The graph defined in Lemma~\ref{lem:compiler-combinatorics} is connected, and its normalized adjacency operator $P_D$ satisfies
\begin{equation}
\label{eq:compiler-spectrum-general}
 \Spec(P_D|_{\one^\perp})
 \subseteq
 \left[
  -\frac1{q^2-2},
  \max\left\{
    1-\frac{q^2\Delta}{L},
    \frac1{(q^2-2)(q^2-1)}
  \right\}
 \right].
\end{equation}
\end{lemma}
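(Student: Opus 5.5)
The plan is to identify the graph of Lemma~\ref{lem:compiler-combinatorics} explicitly as a tensor product and then read off its spectrum from the two factors via Fourier analysis. The main obstacle is the character-sum evaluation in the second paragraph. The rest is bookkeeping of products of eigenvalues.

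\textbf{Step 1: the adjacency structure.} By the proof of Lemma~\ref{lem:compiler-combinatorics}, two distinct columns $(p,y),(r,z)\in\Omega$ lie in a common row iff one of the following holds:
\begin{itemize}
\item $p\ne r$ and $b=(p+r)^{-1}(y+z)\in\widehat D$;
\item $p=r$, which is impossible since the three first coordinates in a row are the distinct elements of $\ell$.
\end{itemize}
When $p\ne r$, the triple $\ell=\{p,r,p+r\}$ automatically lies in $\mathcal L_F$, since $p+r\notin\{0,p,r\}$. Because $\widehat D$ is invariant under multiplication by $F^\times$, the condition $(p+r)^{-1}(y+z)\in\widehat D$ is equivalent to $y+z\in\widehat D$. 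Hence the adjacency matrix is
\[
(J-I)_{F^\times}\otimes A_{\Cay(V_0,\widehat D)},
\]
viewing $V_0$ as an $\F_2$-vector space, and $P_D=Q\otimes P_{\widehat D}$. Here $Q$ is the normalized adjacency of $K_{q^2-1}$, whose eigenvalues are $1$ and $-1/(q^2-2)$, and $P_{\widehat D}$ is that of the Cayley graph. The degrees match: $(q^2-2)L=2\rho$.

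\textbf{Step 2: the Cayley factor.} By nondegeneracy of the trace, every character of $(V_0,+)$ has the form $\psi_c(x)=(-1)^{\Tr(c(x))}$ for a unique $c\in V_0^\vee$. Lemma~\ref{lem:cayley-character-spectrum} then gives the eigenvalue
\[
\mu_c=\frac1L\sum_{b\in\widehat D}(-1)^{\Tr(c(b))}.
\]
I would group $\widehat D$ by projective points $[a]\in D$, whose nonzero multiples are $\alpha a$ for $\alpha\in F^\times$.
\begin{itemize}
\item If $c(a)=0$, the point contributes $q^2-1$.
\item If $c(a)\ne0$, then $\alpha c(a)$ ranges over $F^\times$, and $\sum_{\beta\in F^\times}(-1)^{\Tr\beta}=-1$, so the point contributes $-1$.
\end{itemize}
With $k_c=|\{[a]\in D:c(a)\ne0\}|$ this yields
\[
\mu_c=\frac{(q^2-1)(\nu-k_c)-k_c}{L}=1-\frac{q^2k_c}{L}.
\]
For $c\ne0$ we have $\Delta\le k_c\le\nu$, and $L=(q^2-1)\nu$. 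Therefore
\[
-\frac1{q^2-1}\le\mu_c\le1-\frac{q^2\Delta}{L}<1.
\]
The strict inequality uses $\Delta\ge1$, which holds because $D$ is spanning.

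\textbf{Step 3: combine.} Eigenvectors of $P_D$ are the tensor products $f\otimes\psi_c$, with eigenvalues $\theta\mu_c$ where $\theta\in\{1,-1/(q^2-2)\}$. The constant vector corresponds to $\theta=1$, $c=0$. Every other product is either $1\cdot\mu_c$ with $c\ne0$, lying in $[-1/(q^2-1),\,1-q^2\Delta/L]$, or $-\mu_c/(q^2-2)$ with $\mu_c\in[-1/(q^2-1),1]$, lying in $[-1/(q^2-2),\,1/((q^2-2)(q^2-1))]$. The union of these intervals lies in the interval claimed in \eqref{eq:compiler-spectrum-general}, because $-1/(q^2-1)>-1/(q^2-2)$. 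Finally, the eigenvalue $1$ is simple, since $\mu_c<1$ for $c\ne0$ and $\theta<1$ otherwise. So the regular graph is connected.
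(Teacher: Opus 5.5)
Your proposal is correct and takes essentially the same route as the paper. You identify the graph as the categorical product $K_{q^2-1}\times\Cay(V_0,\widehat D)$ via the $F^\times$-invariance of $\widehat D$, evaluate the Cayley eigenvalues $1-q^2w(c)/L$ by grouping representatives of each direction, multiply by the eigenvalues $1$ and $-1/(q^2-2)$ of the complete graph, and deduce connectivity from $\Delta\ge1$ making the eigenvalue one simple, exactly as the paper does.
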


\begin{proof}
By Lemma~\ref{lem:compiler-combinatorics}, two columns $(p,x)$ and $(r,y)$ are adjacent if and only if $p\ne r$ and $(p+r)^{-1}(x+y)\in\widehat D$. Since $\widehat D$ is closed under multiplication by $F^\times$, this is equivalent to $p\ne r$ and $x+y\in\widehat D$.
Thus the graph is the categorical product
\[
 K_{q^2-1}\times\operatorname{Cay}(V_0,\widehat D),
 \qquad
 P_D=P_{K_{q^2-1}}\otimes P_{\operatorname{Cay}(V_0,\widehat D)}.
\]

For $c\in V_0^\vee$, define $\chi_c(x)=(-1)^{\Tr(c(x))}$. The nondegenerate trace pairing identifies $V_0^\vee$ with the additive characters of $V_0$. By Lemma~\ref{lem:cayley-character-spectrum}, the eigenvalue of $P_{\operatorname{Cay}(V_0,\widehat D)}$ at $\chi_c$ is
\[
 \mu(c)
 =\frac1L\sum_{b\in\widehat D}\chi_c(b)
 =1-\frac{q^2w(c)}L,
\]
where $w(c)$ is the number of $[a]\in D$ with $c(a)\ne0$. Indeed, the representatives of a direction $[a]$ with $c(a)=0$ contribute $q^2-1$, while those of a direction with $c(a)\ne0$ contribute $-1$.

The normalized adjacency eigenvalues of $K_{q^2-1}$ are $1$ and $-1/(q^2-2)$, the latter with multiplicity $q^2-2$.  Taking products gives the exact eigenvalues
\begin{equation}
 \lambda_0(c)=\mu(c)=1-\frac{q^2w(c)}L,
 \qquad
 \lambda_1(c)=-\frac{\mu(c)}{q^2-2}
 =\frac{q^2w(c)-L}{(q^2-2)L}.
 \label{eq:exact-compiler-eigenvalues}
\end{equation}
In the invariant subspace corresponding to $c$, the eigenvalues are $\lambda_0(c)$ once and $\lambda_1(c)$ repeated $q^2-2$ times, with multiplicities added when they coincide.

For $c=0$, the two eigenvalues in \eqref{eq:exact-compiler-eigenvalues} are $1$ and $-1/(q^2-2)$; only the first has a constant eigenvector.  For $c\ne0$, the definition of $\Delta(D)$ gives $\Delta\le w(c)\le|D|$.  Therefore
\[
 \lambda_0(c)\le1-\frac{q^2\Delta}{L},
 \qquad
 -\frac1{q^2-2}\le\lambda_1(c)
 \le\frac1{(q^2-2)(q^2-1)},
\]
while $\lambda_0(c)\ge-1/(q^2-1)$.  These bounds give \eqref{eq:compiler-spectrum-general}.  Since $D$ spans $V_0$, one has $\Delta\ge1$, so the upper endpoint in \eqref{eq:compiler-spectrum-general} is strictly below one.  For a finite regular graph, the multiplicity of the normalized-adjacency eigenvalue one is the number of connected components.  The graph is therefore connected.
\end{proof}

\subsection{Proof of Lemma~\ref{lem:matrix-family}}\label{proof in 2}

\begin{proof}
Fix $\lambda>0$, choose $q=2^h\ge8$ so that $4/(q-2)<\lambda$, set $F=\F_{q^2}$, and take any set $D$ supplied by Lemma~\ref{lem:direction-affine-family}. Construct $H=H_D$ as above, so $M=(q^2-1)q^{2t}$. Lemma~\ref{lem:compiler-combinatorics} gives $\rho=(q^2-2)(q^2-1)|D|/2=O_\lambda(t)$, $R=\rho M/3$, and distinct weight-three rows. Lemma~\ref{lem:compiler-kernel} gives row-space distance at least three and
\[
 \dim_{\F_2}\ker H
 \ge 2h\dim_F\cA(D)
 \ge 2h\,2^{t/3}.
\]

Since $L=(q^2-1)\nu$, the lower bound on $\Delta/\nu$ in \eqref{eq:uniform-direction-affine-input} gives
\[
 1-\frac{q^2\Delta}{L}
 \le 1-\frac{\Delta}{\nu}
 \le \frac3{q-2}
 <\frac4{q-2}.
\]
Together with $1/(q^2-2)<4/(q-2)<\lambda$, Lemma~\ref{lem:compiler-spectrum} and \eqref{eq:compiler-spectrum-general} yield $\|P_D|_{\one^\perp}\|\le\lambda$.  Simplicity and regularity follow from Lemma~\ref{lem:compiler-combinatorics}, and connectivity follows from Lemma~\ref{lem:compiler-spectrum}. The set $D$ is explicit, and, since $F$ is fixed, all scalar representatives and rows of $H$ can be enumerated by finite-field arithmetic in time polynomial in the output size.
\end{proof}

%% file: 04-ag-direction-system.tex
\section{Direction sets from algebraic curves}
\label{sec:ag-directions}

We prove Lemma~\ref{lem:direction-affine-family} using a Garcia--Stichtenoth tower. Evaluation at rational points gives the directions. Multiplication of functions in Riemann--Roch spaces gives a matrix of linear forms whose maximal minors belong to $\cA(D)$. This section uses several facts from algebraic geometry. Readers unfamiliar with them may consult~\cite[Chapter~1]{Sti09}, or Appendix~\ref{app:ag-background} for a brief review.

Fix $q=2^h\ge8$ and $F=\F_{q^2}$. For a set $D\subseteq PG(t-1,F)$, we use $\Delta(D)$ and $\cA(D)$ as defined in Subsection~\ref{subsec:direction-affine-input}.

We recall the basic notation for curves. Let $X/F$ be a smooth projective geometrically integral curve, with function field $F(X)$ and genus $g$. Informally, smoothness excludes singularities, projectivity includes the points at infinity, and geometric integrality means that the curve remains irreducible and reduced over an algebraic closure. An $F$-rational point $P\in X(F)$ allows evaluation $f(P)\in F$ whenever $f\in F(X)$ has no pole at $P$. A divisor $C$ is a finite integer combination of closed points; its degree is the sum of the coefficients weighted by the degrees of the points over $F$. In particular, rational points have degree one. For $f\ne0$, the divisor $\operatorname{div}(f)$ records the orders of its zeros and poles, with poles having negative coefficients.

The Riemann--Roch space is $L(C)=\{0\}\cup\{f\in F(X)^\times:\operatorname{div}(f)+C\ge0\}$, and we write $\ell(C)=\dim_F L(C)$. Thus $L(mP_\infty)$ consists of functions with no poles except possibly a pole of order at most $m$ at $P_\infty$. The genus enters through Riemann--Roch: $\ell(C)\ge\deg C+1-g$, with equality when $\deg C>2g-2$. We also use $L(A)L(B)\subseteq L(A+B)$ and the fact that a nonzero function in $L(mP_\infty)$ has at most $m$ rational zeros away from $P_\infty$. Appendix~\ref{app:ag-background} gives further explanation and references for these facts.

\subsection{Directions from the Garcia--Stichtenoth tower}
\label{subsec:ag-evaluation-directions}

\begin{lemma}[Garcia--Stichtenoth tower]\cite{GS96}
\label{lem:gs-tower-input}
There is an explicit sequence of smooth projective geometrically integral curves $X_r/F$ of genera $g_r$, together with points $P_{\infty,r}\in X_r(F)$ and explicitly enumerable sets $\mathcal P_r\subseteq X_r(F)\setminus\{P_{\infty,r}\}$, such that
\[
 g_r\longrightarrow\infty,
 \qquad
 \frac{|\mathcal P_r|}{g_r}\longrightarrow q-1.
\]
\end{lemma}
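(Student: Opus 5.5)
This statement is the Garcia--Stichtenoth tower cited from~\cite{GS96}, so I will not reprove the asymptotic optimality from scratch. Instead I will recall the explicit tower and extract the listed properties from known facts about it. Over $F=\F_{q^2}$, define $F_1=F(x_1)$ and $F_{r+1}=F_r(x_{r+1})$ with
\[
 x_{r+1}^q+x_{r+1}=\frac{x_r^q}{x_r^{q-1}+1}.
\]
Let $X_r$ be the smooth projective model of $F_r$. Geometric integrality amounts to $F$ being algebraically closed in $F_r$. This holds because the pole of $x_1$ is totally ramified throughout the tower, which gives a rational point $P_{\infty,r}$ of degree one, namely the unique place over $x_1=\infty$.

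\textbf{Step 1: rational points.} For every $\alpha\in F$ with $\alpha^q+\alpha\neq0$, that is $\alpha\notin\F_q$ (there are $q^2-q$ such values), the place $x_1=\alpha$ splits completely in every Artin--Schreier step. The reason is that the right-hand side again takes a value $\beta$ with $\beta^q+\beta\ne0$, and the equation $T^q+T=\gamma$ has $q$ roots in $F$ exactly when $\operatorname{Tr}_{F/\F_q}(\gamma)=0$. This gives $|\mathcal P_r|=(q^2-q)q^{r-1}$ rational points, enumerated explicitly by recursively solving $T^q+T=\gamma$ over $F$. These points are distinct from $P_{\infty,r}$.

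\textbf{Step 2: genus.} Garcia and Stichtenoth compute the genus exactly via Hurwitz's formula. Ramification is confined to places over $x_1\in\F_q\cup\{\infty\}$, the different exponents are controlled by wild Artin--Schreier ramification, and the result is $g_r=(q^{r/2}-1)^2$ or $(q^{(r+1)/2}-1)(q^{(r-1)/2}-1)$ according to the parity of $r$. In particular $g_r\sim q^{r}$, and
\[
 \frac{|\mathcal P_r|}{g_r}\longrightarrow\frac{(q^2-q)q^{r-1}}{q^r}=q-1.
\]

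The main obstacle is the genus computation in Step~2. It requires tracking which places ramify and with what different exponents at each level. I would quote~\cite{GS96} (or \cite[Chapter~7]{Sti09}) for the genus formula rather than rederive it.
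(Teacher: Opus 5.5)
Your route matches the paper's, which gives no proof and only cites \cite{GS96}. Your recursion, the genus formula, the count $(q^2-q)q^{r-1}$, and the limit $q-1$ are correct. So is your argument that the totally ramified pole of $x_1$ gives a rational point $P_{\infty,r}$ and hence forces $F$ to be the full constant field.

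The one step you actually argue is misstated, though. As written, the right-hand side takes a value $\gamma$ with $\gamma^q+\gamma\neq0$. Then $\Tr_{F/\F_q}(\gamma)\neq0$, and $T^q+T=\gamma$ would have \emph{no} roots in $F$, which is the opposite of what you need. The computation you are missing is the following. If $x_r(P)=\alpha\notin\F_q$, then $\alpha^{q-1}\neq1$, so the right-hand side is regular at $P$ and takes the value
\[
 \frac{\alpha^q}{\alpha^{q-1}+1}=\frac{\alpha^{q+1}}{\alpha^q+\alpha}
 =\frac{N_{F/\F_q}(\alpha)}{\Tr_{F/\F_q}(\alpha)}\in\F_q^\times .
\]
This value has trace zero, so $T^q+T=\gamma$ has $q$ distinct roots $\beta\in F$. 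Kummer's theorem then gives $q$ rational places above $P$. Each root satisfies $\beta^q+\beta=\gamma\neq0$, so $\beta\notin\F_q$. That is the invariant that propagates up the tower, not a property of the right-hand side's value.
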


For fixed $q$, the points $P_{\infty,r}$, the sets $\mathcal P_r$, and bases of the one-point Riemann--Roch spaces used below can be computed in time polynomial in the input and output sizes~\cite{GX22}; the basis algorithm used there is due to~\cite{Shum01}.

Every sufficiently large level has $|\mathcal P_r|\ge(q-2)g_r$. Fix such a level with $g_r\ge4$, abbreviate its curve and genus by $X$ and $g$, let $P_\infty=P_{\infty,r}$, and choose distinct $P_1,\ldots,P_\nu\in\mathcal P_r$, where $\nu=(q-2)g$. Set $G=3gP_\infty$, $W=L(G)$, and $t=\dim_F W=2g+1$. The equality $t=2g+1$ follows from Riemann--Roch \cite{Sti09}.  Fix an $F$-linear identification $W^\vee\cong F^t$.  For $P\in X(F)\setminus\{P_\infty\}$, let $e_P\in W^\vee$ be the evaluation functional, $e_P(s)=s(P)$, and define $D=\{[e_{P_1}],\ldots,[e_{P_\nu}]\}\subseteq PG(t-1,F)$.

Since $\nu>\deg G=3g$, the evaluation map $W\to F^\nu$ is injective \cite[Corollary~2.2.3]{Sti09}, so the $e_{P_j}$ span $W^\vee$.  Because $1\in W$, proportional evaluation functionals must be equal.  For distinct selected points $P\ne R$, Riemann--Roch gives $\ell(G-R)=2g$ and $\ell(G-P-R)=2g-1$ \cite[Theorem~1.5.17]{Sti09}.  Hence some $s\in L(G-R)\setminus L(G-P-R)$ satisfies $s(R)=0$ and $s(P)\ne0$, so $e_P\ne e_R$.  Thus the projective points $[e_P]$ are distinct and $|D|=\nu=(q-2)g=O_q(t)$.  Finally, every hyperplane in $W^\vee$ has the form $\{x:x(s)=0\}$ for some $0\ne s\in W$. Such an $s$ has at most $\deg G=3g$ zeros among the selected points, so $\Delta(D)\ge \nu-3g=(q-5)g$.

\begin{remark}
\label{rem:square-field}
A reader may wonder why we work over the square field $F=\F_{q^2}$. The Drinfeld--Vl\u{a}du\c{t} bound gives an asymptotic ratio of rational points to genus at most $q-1$~\cite[Theorem~7.1.3]{Sti09}, and the towers of Garcia and Stichtenoth~\cite{GS95,GS96} attain it. This leaves enough evaluation points to obtain the relative distance $(q-5)/(q-2)$.
\end{remark}

\begin{example}
Let $z$ be the affine coordinate on $\mathbb P^1_F$, let $P_\infty$ be the point at infinity, and take
\[
 W_2=L(2P_\infty)
 =\operatorname{span}_F\{1,z,z^2\}.
\]
For four distinct $a_1,\ldots,a_4\in F$, evaluation gives $e_{a_i}=(1,a_i,a_i^2)\in W_2^\vee$, and any three of these vectors span by the Vandermonde determinant. Since a nonzero quadratic has at most two roots, $D=\{[e_{a_i}]:i\in[4]\}$ has $\Delta(D)=2$, with equality witnessed by $(z-a_1)(z-a_2)$. The tower replaces this finite example by a family with $t\to\infty$.
\end{example}

\subsection{Maximal minors and affine restrictions}
\label{sec:determinantal-affine-functions}

Set $k=\lfloor g/4\rfloor$ and $m=3k$, and define $A=(g+k)P_\infty$ and $B=G-A$. The Riemann--Roch lower bound $\ell(C)\ge\deg C+1-g$~\cite[Theorem~1.5.15]{Sti09} gives $\ell(A)\ge k+1$ and $\ell(B)\ge g-k+1\ge m+1$. Choose $U\le L(A)$ with $\dim_FU=k$ and $V\le L(B)$ with $\dim_FV=m$. Since $A+B=G$, multiplication defines the $F$-bilinear map $U\times V\longrightarrow W$, $(u,z)\longmapsto uz$.

Here $W=L(G)$ consists of functions on the curve, whereas $\cA(D)$ consists of functions on $W^\vee$. We associate a matrix of linear forms with the multiplication map $U\times V\to W$. Choose bases $u_1,\ldots,u_k$ of $U$ and $z_1,\ldots,z_m$ of $V$. For $x\in W^\vee$, define
\[
 \mathcal M(x)=\bigl(x(u_i z_j)\bigr)_{1\le i\le k,\,1\le j\le m}.
\]
For $J\subseteq[m]$ with $|J|=k$, let $p_J(x)=\det \mathcal M_J(x)$, where $\mathcal M_J$ is the square submatrix with column set $J$.

\begin{example}
The rank-one calculation is illustrated on $\mathbb P^1_F$.  Take
\[
 W_4=L(4P_\infty)=\operatorname{span}_F\{1,z,z^2,z^3,z^4\},
 \quad
 U_1=L(P_\infty),
 \quad
 V_3=L(3P_\infty),
\]
so that $U_1V_3\subseteq W_4$.  Using the monomial bases and writing $x_j=x(z^j)$, the resulting matrix of linear forms is
\[
 \mathcal M(x)
 =
 \begin{pmatrix}
  x_0&x_1&x_2&x_3\\
  x_1&x_2&x_3&x_4
 \end{pmatrix}.
\]
At the affine point $P_a$, evaluation gives the rank-one matrix
\[
 \mathcal M(e_a)
 =
 \begin{pmatrix}1\\a\end{pmatrix}
 \begin{pmatrix}1&a&a^2&a^3\end{pmatrix},
\]
so adding $\alpha e_a$ to $x$ adds a rank-one matrix.  For example, the minor in the first two columns is $p_{12}(x)=x_0x_2+x_1^2$, and in characteristic two
\[
 \begin{aligned}
 p_{12}(x+\alpha e_a)
 &=(x_0+\alpha)(x_2+\alpha a^2)
   +(x_1+\alpha a)^2\\
 &=p_{12}(x)+\alpha(x_2+a^2x_0).
 \end{aligned}
\]
The two copies of $\alpha^2a^2$ cancel.  Thus a quadratic function on $W_4^\vee$ becomes affine on every line $x+F e_a$.
\end{example}

\begin{lemma}
\label{lem:rank-one-restriction}
For every selected point $P$, $x\in W^\vee$, and $\alpha\in F$,
\begin{equation}
 \mathcal M(x+\alpha e_P)
 =\mathcal M(x)+\alpha\,\mathbf u(P)\mathbf z(P)^{\mathsf T},
 \label{eq:rank-one-update}
\end{equation}
where $\mathbf u(P)=(u_i(P))_i$ and $\mathbf z(P)=(z_j(P))_j$.  Consequently every $p_J$ belongs to $\cA(D)$.
\end{lemma}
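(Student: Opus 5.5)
The plan has two parts. First establish the rank-one update \eqref{eq:rank-one-update} entrywise. Then deduce that each maximal minor is affine along every line $x+Fe_P$, using the fact that a determinant is multilinear in its rows.

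\textbf{Step 1: the rank-one update.} For each $i,j$, linearity of $x+\alpha e_P$ as a functional on $W$ gives
\[
(x+\alpha e_P)(u_iz_j)=x(u_iz_j)+\alpha\,(u_iz_j)(P).
\]
The functions $u_i\in L(A)$ and $z_j\in L(B)$ have poles only at $P_\infty$. Since $P\neq P_\infty$, both are regular at $P$, and evaluation at $P$ is a ring homomorphism on functions regular there. Hence $(u_iz_j)(P)=u_i(P)z_j(P)$, which is exactly the $(i,j)$ entry of $\alpha\,\mathbf u(P)\mathbf z(P)^{\mathsf T}$.

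\textbf{Step 2: each minor is affine in $\alpha$.} Fix $J$ with $|J|=k$. Restricting to the columns in $J$ gives
\[
\mathcal M_J(x+\alpha e_P)=\mathcal M_J(x)+\alpha\,\mathbf u\,\mathbf z_J^{\mathsf T},
\]
where $\mathbf u=\mathbf u(P)$ and $\mathbf z_J=(z_j(P))_{j\in J}$. Row $i$ of this matrix is $\text{row}_i(\mathcal M_J(x))+\alpha u_i\mathbf z_J^{\mathsf T}$.

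Expand the determinant multilinearly in the rows. Each term picks, for each row, either the original row or the perturbation $\alpha u_i\mathbf z_J^{\mathsf T}$. Any term choosing the perturbation in two or more rows has two rows proportional to $\mathbf z_J^{\mathsf T}$, so its determinant vanishes. This holds in any characteristic, because a matrix with two proportional rows has determinant zero over any field. Only the term with no perturbation and the $k$ terms with exactly one perturbation survive. Therefore
\[
p_J(x+\alpha e_P)=p_J(x)+\alpha\sum_{i=1}^k u_i\det\bigl(\mathcal M_J(x)\text{ with row }i\text{ replaced by }\mathbf z_J^{\mathsf T}\bigr),
\]
which has the form $A+B\alpha$ with $A,B$ depending only on $x$ and $P$. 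Equivalently, this is the matrix determinant lemma $\det(M+\alpha uv^{\mathsf T})=\det M+\alpha\,v^{\mathsf T}\operatorname{adj}(M)u$, which holds as a polynomial identity and needs no invertibility.

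\textbf{Step 3: conclusion.} Under the identification $W^\vee\cong F^t$, the direction $[e_P]$ is a point of $D$. Being affine in direction $e_P$ is the same as being affine in direction $[e_P]$, and this holds for every selected $P$. Hence $p_J\in\cA(D)$.

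The only point needing care is Step 2, specifically the vanishing of the terms with two or more perturbed rows. The argument must not rely on any characteristic-dependent cancellation, and the proportional-rows argument above satisfies this.
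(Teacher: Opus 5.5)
Your proof is correct and follows essentially the same route as the paper: an entrywise computation gives the rank-one update, and each maximal minor is affine in $\alpha$ by the rank-one determinant identity $\det(C+\alpha ab^{\mathsf T})=\det C+\alpha\, b^{\mathsf T}\operatorname{adj}(C)a$. Your row-multilinearity expansion just proves that identity from scratch, and your remark that $(u_iz_j)(P)=u_i(P)z_j(P)$ holds because $P\ne P_\infty$ spells out a detail the paper leaves implicit.
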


\begin{proof}
Each matrix entry changes by $\alpha u_i(P)z_j(P)$, which proves \eqref{eq:rank-one-update}.  The rank-one determinant identity $\det(C+\alpha ab^{\mathsf T}) =\det C+\alpha b^{\mathsf T}\operatorname{adj}(C)a$ shows that every maximal minor is affine in $\alpha$.
\end{proof}

\begin{proposition}
\label{prop:maximal-minor-independence}
The functions
\[
 \{p_J:W^\vee\to F:J\subseteq[m],\ |J|=k\}
\]
are $F$-linearly independent.
\end{proposition}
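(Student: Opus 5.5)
The plan is to reduce linear independence of the functions $p_J$ on $W^\vee$ to a statement about coefficients of a multilinear polynomial, by restricting to the span of the evaluation functionals. We must be careful here, because we are dealing with functions on the finite space $W^\vee$ rather than formal polynomials. Each $p_J$ has degree $k=\lfloor g/4\rfloor$, which exceeds $q^2$ for large $g$, so we cannot simply identify these polynomials with the functions they define. The restriction to the evaluation functionals avoids this difficulty.

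\emph{Step 1 (multilinear restriction).} Suppose $\sum_J c_Jp_J=0$ as a function on $W^\vee$. Take the selected points $P_1,\dots,P_\nu$ and restrict to $x=\sum_{l}\alpha_l e_{P_l}$ with $\alpha\in F^\nu$. By \eqref{eq:rank-one-update} we have $\mathcal M(x)=\mathbf U\,\mathrm{diag}(\alpha)\,\mathbf Z^{\mathsf T}$, where $\mathbf U\in F^{k\times\nu}$ and $\mathbf Z\in F^{m\times\nu}$ have columns $\mathbf u(P_l)$ and $\mathbf z(P_l)$. Cauchy--Binet then gives
\[
 p_J(x)=\sum_{S\subseteq[\nu],\,|S|=k}\Bigl(\prod_{l\in S}\alpha_l\Bigr)\det\mathbf U_S\,\det\mathbf Z_{J,S}.
\]
This expression is multilinear in $\alpha$. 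A multilinear polynomial over $F$ that vanishes as a function on $F^\nu$ is zero, since its degree in each variable is below $|F|$. Hence, for every $k$-subset $S$,
\[
 \det\mathbf U_S\cdot\sum_J c_J\det\mathbf Z_{J,S}=0 .
\]

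\emph{Step 2 (reduction to a spanning statement).} It therefore suffices to show that the Plücker vectors $\bigl(\det\mathbf Z_{J,S}\bigr)_J\in\wedge^kF^m$, taken over the $k$-subsets $S$ with $\det\mathbf U_S\ne0$, span $\wedge^kF^m$. Along the way we use two facts. First, $\mathbf U$ has rank $k$ and $\mathbf Z$ has rank $m$, because $\nu=(q-2)g$ exceeds both $\deg A=g+k$ and $\deg B=2g-k$, so evaluation at the selected points is injective on $L(A)\supseteq U$ and on $L(B)\supseteq V$. Second, we may change the bases of $U$ and $V$ freely. Such a change multiplies the family $(p_J)$ by a scalar, respectively by the invertible matrix $\wedge^k$ of the base change, and so does not affect linear independence.

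\emph{Step 3 (main obstacle: the spanning statement).} This is the step where the argument can fail, since $U$ is an arbitrary $k$-dimensional subspace. My first attempt will be to choose $m$ points $Q_1,\dots,Q_m$ among the selected points at which $V$ evaluates bijectively, and to change the basis of $V$ so that $z_j(Q_l)=\delta_{jl}$. Then $\det\mathbf Z_{J,S}$ is a unit vector $e_J$ for $S\subseteq\{Q_l\}$. The required spanning then follows once every $k$-subset of the $Q_l$ is $U$-independent, i.e.\ $\det\mathbf U_S\ne0$ for each such $S$.

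I expect to secure this using the surplus of points. A nonzero $u\in U\subseteq L(A)$ vanishes at no more than $g+k$ points, while $\nu=(q-2)g$ is much larger. So I will choose the $Q_l$ greedily, keeping them in general position for $U$ and independent for $V$ at the same time. If full general position cannot be arranged, I will instead do a triangular (exchange) argument. Order the $J$ lexicographically; for each $J$, choose a $U$-independent $S$ whose Plücker vector has leading term $e_J$. This uses that the $k$-subsets $S$ with $\det\mathbf U_S\ne0$ are the bases of the matroid of $\mathbf U$, which is large because any $k$ columns fail to be independent only when some $u$ vanishes on them. The resulting triangular system shows that all $c_J=0$.
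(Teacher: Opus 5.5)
\textbf{There is a genuine gap in Step 3.} Your Steps 1 and 2 are correct. Step 1 is a clean version of the multilinear half of the paper's proof. The Cauchy--Binet expansion over all $\nu$ selected points shows that independence of the $p_J$ as functions is \emph{equivalent} to your spanning statement, that is, to their independence as polynomials. The paper obtains that polynomial independence from $1$-genericity of $\mathcal M$, Eisenbud's height formula $\operatorname{ht}I=m-k+1$, and minimality of the Eagon--Northcott generators. So Step 3 carries the whole content of the proposition, and it is not established.

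Your first attempt cannot work for large $g$. It needs $m=3k$ points at which every $k\times k$ minor of $\mathbf U$ is nonzero, i.e.\ a $k\times 3k$ generator matrix of an MDS code over $\F_{q^2}$. MDS codes of dimension $k\ge2$ over $\F_Q$ have length at most $Q+k-1$, so this forces $2k+1\le q^2$, which is false once $k=\lfloor g/4\rfloor$ is large. The fallback is only asserted. Nothing produces, for each $J$, a $U$-independent $S$ with leading Plücker coordinate $e_J$; a point outside the $Q_l$ typically has a $\mathbf z$-vector of full support in your normalized basis. The remark that $k$ columns are dependent only when some $u$ vanishes on them is a tautology, not an estimate. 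Above all, your argument never uses the input that couples $U$ and $V$: a nonzero product $uz\in L(A)L(B)\subseteq L(G)$ has at most $3g<\nu$ zeros among the selected points.

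\textbf{How to close it.} That input gives condition (C): for every nonzero $u\in U$, the vectors $\mathbf z(P_l)$ with $u(P_l)\ne0$ span $F^m$. Otherwise some nonzero $z\in V$ vanishes wherever $u$ does not, and then $uz$ vanishes at all $\nu$ points.

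Now prove an abstract lemma. Let $a_l\in F^k$ and $b_l\in F^m$ be vectors such that, for every nonzero functional $\beta$ on $F^k$, the $b_l$ with $\beta(a_l)\ne0$ span $F^m$. Then the wedges $b_{l_1}\wedge\cdots\wedge b_{l_k}$ with $a_{l_1},\ldots,a_{l_k}$ independent span $\wedge^kF^m$.

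Induct on $k$; the case $k=0$ is trivial. For each $l_0$ with $a_{l_0}\ne0$, pass to the quotients $F^k/\langle a_{l_0}\rangle$ and $F^m/\langle b_{l_0}\rangle$, indexed by the remaining $l$.
\begin{itemize}
\item The hypothesis is inherited, because nonzero functionals on the first quotient are exactly the $\beta$ with $\beta(a_{l_0})=0$, and these never see the index $l_0$.
\item A $(k-1)$-set $S'\not\ni l_0$ is independent in the quotient iff $S'\cup\{l_0\}$ is independent in $F^k$.
\item The map $\bar\omega\mapsto b_{l_0}\wedge\omega$ sends $\wedge^{k-1}(F^m/\langle b_{l_0}\rangle)$ onto $b_{l_0}\wedge\wedge^{k-1}F^m$.
\end{itemize}
So the admissible wedges containing $l_0$ span $b_{l_0}\wedge\wedge^{k-1}F^m$. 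Summing over all such $l_0$, whose $b_{l_0}$ span $F^m$ by the hypothesis, gives all of $\wedge^kF^m$.

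With this lemma in place of Step 3, your argument becomes a fully elementary alternative to the commutative-algebra half of the paper's proof.
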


\begin{proof}
\emph{Polynomial independence.} Extend scalars to an algebraic closure $\overline F$, and write $W_{\overline F}=W\otimes_F\overline F$ and $X_{\overline F}=X\times_F\overline F$. Regard $\mathcal M$ as a matrix of linear forms in the polynomial ring $S=\operatorname{Sym}_{\overline F}(W_{\overline F})$. For nonzero $\alpha\in\overline F^k$ and $\beta\in\overline F^m$, its generalized entry is $(\sum_i\alpha_i u_i)(\sum_j\beta_j z_j)$. Both factors are nonzero, since the chosen bases remain linearly independent after extending scalars, and their product is nonzero because $X_{\overline F}$ is integral. Thus $\mathcal M$ is $1$-generic.

Let $I\subseteq S$ be the ideal of maximal minors and let $\mathfrak m=S_{>0}$ be the homogeneous maximal ideal. Theorem~6.4 of~\cite{Eis05} gives $\operatorname{ht}I=m-k+1$. The local ring $S_{\mathfrak m}$ is regular, so $\operatorname{grade}(IS_{\mathfrak m})=\operatorname{ht}(IS_{\mathfrak m})=m-k+1$. All matrix entries lie in its maximal ideal; hence Corollary~A2.61 of~\cite{Eis05} shows that the $\binom{m}{k}$ maximal minors form a minimal generating set of $IS_{\mathfrak m}$. A nonzero scalar linear relation would express one minor in terms of the others, contradicting minimality. The minors are therefore linearly independent as polynomials over $\overline F$, and hence over $F$.

\emph{Independence as functions.} Over a finite field, polynomial independence alone does not imply independence of the induced functions. Choose selected evaluation functionals $e_{R_1},\ldots,e_{R_t}$ that form a basis of $W^\vee$, using the spanning property proved above.  Write $x=\sum_{\ell=1}^t c_\ell e_{R_\ell}$.  Fix $\ell$ and regard the other $c_j$ as indeterminates.  Then
\[
 \mathcal M(x)=C+c_\ell\,\mathbf u(R_\ell)\mathbf z(R_\ell)^{\mathsf T}
 \qquad
 \text{over }F[c_j:j\ne\ell].
\]
The rank-one determinant identity shows that every maximal minor has degree at most one in $c_\ell$.  Since this holds for every $\ell$, every linear combination of the $p_J$ is multilinear in $c_1,\ldots,c_t$.  A multilinear polynomial that vanishes on $\{0,1\}^t$ is zero, by induction on $t$.  Hence a combination that vanishes as a function on $W^\vee$ is the zero polynomial, and polynomial independence forces all its coefficients to vanish.
\end{proof}

\subsection{Proof of Lemma~\ref{lem:direction-affine-family}}\label{proof in 3}

\begin{proof}
Use the direction set constructed in Subsection~\ref{subsec:ag-evaluation-directions}.

Lemma~\ref{lem:rank-one-restriction} shows that all maximal minors belong to $\cA(D)$, while Proposition~\ref{prop:maximal-minor-independence} shows that they are linearly independent as functions. Consequently,
\[
 \dim_F\cA(D)
 \ge\binom{3k}{k}
 \ge\frac{(27/4)^k}{3k+1}
 \ge 2^{t/3}
\]
for all sufficiently large $g$. The middle inequality follows because $\binom{3k}{k}2^{2k}$ is the largest term in the expansion of $(1+2)^{3k}$. For the last inequality, $t=2g+1=8k+O(1)$ and $\log_2(27/4)>8/3$. Discarding finitely many earlier tower levels therefore gives the claimed infinite family.

The tower algorithms compute the selected rational points and a basis of $W$, and evaluation then produces $D$ by finite-field linear algebra.
\end{proof}

%% file: 05-weighted-dimension-lift.tex
\section{Linear-degree Cayley complexes from graph products}
\label{sec:dimension-lift}

In this section, we derive Theorem~\ref{thm:main-higher-dimensional} from the graph-product construction in~\cite{Gol21}, which refines~\cite{LMY20}. The argument is independent of Sections~\ref{sec:relation-matrices} and~\ref{sec:ag-directions}.

\begin{lemma}
\label{lem:universal-negative-eigenvalue}
Let $\ell\ge1$, let $(X,\mu_\ell)$ be a pure weighted $\ell$-dimensional simplicial complex, and let $P$ be the random-walk operator of its weighted one-skeleton.  Then every eigenvalue of $P$ is at least $-1/\ell$.
\end{lemma}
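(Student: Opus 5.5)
The plan is to write the one-skeleton random walk as an average, over top faces, of random walks on complete graphs, and then apply the spectral bound for complete graphs to each piece. With the conventions of Section~\ref{sec:preliminaries}, the induced edge weight is $\mu_1(\{u,v\})=\binom{\ell+1}{2}^{-1}\sum_{\tau\ni u,v}\mu_\ell(\tau)$. The degree is then $d(u)=\sum_{v}\mu_1(\{u,v\})$. This is proportional to $\mu_0(u)$, since each top face $\tau\ni u$ contributes $\ell$ edges at $u$. So the stationary measure $\pi$ is $\mu_0$, and the inner product $\langle f,g\rangle_\pi$ is taken with respect to $\mu_0$.

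Next I would express the quadratic form. Let $f$ be a real function on the vertices, and write $\langle f,Pf\rangle_\pi=\sum_{u}\pi(u)f(u)\sum_v P(u,v)f(v)$. Since $\pi(u)P(u,v)=w_{uv}/\sum_x d(x)$, this is a constant multiple of
\[
 \sum_{\tau\in X(\ell)}\mu_\ell(\tau)\sum_{\substack{u,v\in\tau\\ u\ne v}}f(u)f(v).
\]
Likewise, $\langle f,f\rangle_\pi$ is a constant multiple of $\sum_\tau\mu_\ell(\tau)\sum_{u\in\tau}f(u)^2$. Tracking the constants $\binom{\ell+1}{2}$ and $\ell+1$ shows that
\[
 \langle f,Pf\rangle_\pi=\frac{\mathbb E_{\tau\sim\mu_\ell}\sum_{u\ne v\in\tau}f(u)f(v)}{\ell\,\mathbb E_{\tau\sim\mu_\ell}\sum_{u\in\tau}f(u)^2}\,\langle f,f\rangle_\pi .
\]
To double-check this ratio, I would test it on $f=\mathbf1$. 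There the ratio is $\ell(\ell+1)/(\ell(\ell+1))=1$, as it should be.

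The key step is a pointwise bound inside each top face. Each $\tau$ has $\ell+1$ vertices, and
\[
 \sum_{u\ne v\in\tau}f(u)f(v)=\Bigl(\sum_{u\in\tau}f(u)\Bigr)^2-\sum_{u\in\tau}f(u)^2\ \ge\ -\sum_{u\in\tau}f(u)^2 .
\]
This is the statement that the complete graph $K_{\ell+1}$ has least adjacency eigenvalue $-1$. Averaging over $\tau$ then gives $\langle f,Pf\rangle_\pi\ge-\frac1\ell\langle f,f\rangle_\pi$. Because $P$ is self-adjoint for $\langle\cdot,\cdot\rangle_\pi$, the Rayleigh quotient characterization of eigenvalues shows that every eigenvalue is at least $-1/\ell$.

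The main obstacle is the normalization bookkeeping. The weights of edges and vertices are induced from $\mu_\ell$ with different binomial factors, and one must check that these factors give exactly the factor $1/\ell$ in the ratio. Purity is needed so that every vertex and edge receives its weight from top faces and no isolated vertices occur. I would handle the bookkeeping by writing both forms directly as expectations over $\tau\sim\mu_\ell$ before comparing them.
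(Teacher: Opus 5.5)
Your proposal is correct and takes essentially the same route as the paper. Both arguments identify $\pi=\mu_0$ (indeed $d(u)=2\mu_0(u)$), write $\langle f,Pf\rangle_\pi$ and $\lVert f\rVert_\pi^2$ as expectations over $\tau\sim\mu_\ell$, and use $\sum_{u\ne v\in\tau}f(u)f(v)=\bigl(\sum_{u\in\tau}f(u)\bigr)^2-\sum_{u\in\tau}f(u)^2$ before invoking the Rayleigh quotient; the paper packages this as the identity $\langle f,Pf\rangle_\pi+\frac1\ell\lVert f\rVert_\pi^2=\frac{1}{\ell(\ell+1)}\mathbb E_{\tau\sim\mu_\ell}\bigl(\sum_{v\in\tau}f(v)\bigr)^2\ge0$, and your constants agree with it.
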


\begin{proof}
The induced measures satisfy $\pi=\mu_0$ and $P(u,v)=\mu_1(\{u,v\})/(2\mu_0(u))$ on edges. For every real function $f$ on the vertices,
\[
 \langle f,Pf\rangle_\pi+\frac1\ell\lVert f\rVert_\pi^2
 =\frac1{\ell(\ell+1)}
   \mathbb E_{\tau\sim\mu_\ell}
   \left(\sum_{v\in\tau}f(v)\right)^2\ge0.
\]
The Rayleigh-quotient characterization gives the result.
\end{proof}

\paragraph{The product complex.} Let $G=(V,E,w)$ be a finite connected loopless graph with positive edge weights, let $K\ge2$, and let $s\ge2K$.  Define $Z=Z(G;K,s)$ on $V\times[s]$ as the downward closure of
\[
 Z(K)=\left\{\{(v_i,b_i):0\le i\le K\}:
 \begin{array}{l}
  \{v_0,\ldots,v_K\}=\{u,v\}\text{ for some }\{u,v\}\in E,\\
  b_0,\ldots,b_K\text{ are pairwise distinct}
 \end{array}\right\}.
\]
For $\sigma\in Z(K)$, let $j=|\sigma\cap(\{u\}\times[s])|$, where $\{u,v\}$ is its projected edge, and define
\[
 m_K(\sigma)=\frac{w(\{u,v\})}{\binom{K-1}{j-1}}.
\]
This is independent of the ordering of $u,v$. Normalize $m_K$ to a top-face probability distribution $\mu_K$, and use the induced measures on lower-dimensional faces and links. Under the balanced-weight convention of~\cite{Gol21}, the weights $m_i$ are defined recursively: the weight of each face below the top dimension is the sum of the weights of the faces of dimension one higher that contain it. Iterating this rule gives, for $\sigma\in Z(i)$,
\[
 \begin{aligned}
 m_i(\sigma)
 &=(K-i)!\sum_{\substack{\tau\in Z(K)\\\sigma\subseteq\tau}}m_K(\tau)\\
 &=(K-i)!\binom{K+1}{i+1}
   \left(\sum_{\tau\in Z(K)}m_K(\tau)\right)\mu_i(\sigma).
 \end{aligned}
\]
Thus they differ from the induced measures only by a constant at each dimension and define the same link walks.

\begin{theorem}\cite[Theorem~18]{Gol21}
\label{thm:golowich-product-links}
Let $Z=Z(G;K,s)$ be the product complex above, where $K\ge2$, $s\ge2K$, and $|V|\ge4$. For every $0\le t\le K-2$ and every $F\in Z(t)$, the second-largest eigenvalue of the random-walk operator of the weighted one-skeleton of $Z_F$ is at most $1/(t+2)$.  In particular, every such link graph is connected.
\end{theorem}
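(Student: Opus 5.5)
The plan is to reduce the statement to one-dimensional links via Oppenheim's trickle-down theorem~\cite{Opp18}, and to settle those links by an explicit spectral computation that uses the symmetry of $Z$ under permutations of the labels $[s]$. Recall the form of trickle-down we use. Suppose a weighted complex has a connected one-skeleton, and every vertex link has connected one-skeleton with second eigenvalue at most $\gamma<1$. Then the second eigenvalue of the one-skeleton is at most $\gamma/(1-\gamma)$. The target bounds fit this recursion exactly. If the links of $(t+1)$-faces satisfy the bound $1/(t+3)$, then the links of $t$-faces satisfy
\[
\frac{1/(t+3)}{1-1/(t+3)}=\frac1{t+2}.
\]
Since links of links are links, and the balanced weights $m_i$ induce the same link walks as the measures $\mu_i$, it suffices to prove two things. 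First, every link $Z_F$ with $F\in Z(t)$, $t\le K-2$, has a connected one-skeleton. Second, for $t=K-2$, where $Z_F$ is a weighted graph, the second eigenvalue is at most $1/K$. The induction then runs downward from $t=K-2$ to $t=0$.

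\emph{Describing the links.} A face $F$ has a label set $B(F)$ of size $|F|$ and projects either to a single vertex $u$ or to an edge $\{u,v\}\in E$. In the edge case, $Z_F$ lives on $\{u,v\}\times([s]\setminus B(F))$. The allowed completions are all choices of distinct fresh labels assigned to $u$ or $v$, and the weight of a completion depends only on how many labels go to $u$, through the factor $1/\binom{K-1}{j-1}$. In the vertex case, $Z_F$ is a union over neighbours $v$ of $u$ of such edge-type pieces, glued along $\{u\}\times([s]\setminus B(F))$ and weighted by $w(\{u,v\})$. Connectivity of every link then follows by routing through the common $u$-fibre. Any two fresh labels can be joined through a third label because $s\ge 2K$ leaves at least $K+1$ fresh labels. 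The hypothesis $|V|\ge4$ handles the degenerate small cases of the base graph.

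\emph{The top-dimensional links.} For $|F|=K-1$, the link graph has vertex set $(\{u\}\cup N)\times\Lambda$, where $\Lambda=[s]\setminus B(F)$ and $N$ is either $\{v\}$ or the neighbourhood of $u$. Two vertices are adjacent only if their labels differ. The weights are invariant under $\mathrm{Sym}(\Lambda)$, so the random-walk operator decomposes into two parts. The first consists of functions constant on each label fibre. On this part the operator becomes an explicit walk on $\{u\}\cup N$, with weights determined by $w$ and the binomial factors. The second consists of functions summing to zero over $\Lambda$ within each fibre. On this part each $J-I$ block acts as $-I$, so its eigenvalues are negative and of order $1/|\Lambda|$. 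For the first part, the main claim is that the choice $m_K=w/\binom{K-1}{j-1}$ makes the walk on $\{u\}\cup N$ balanced. Concretely, the stationary mass of $u$ should equal the total mass of $N$, and the walk between the $u$-fibre and the $N$-fibres should then have second eigenvalue at most $1/K$. I would verify this by computing the small quotient matrices directly, separately in the two cases $F$ projecting to $u$ and $F$ projecting to $\{u,v\}$, with $j=|F\cap(\{u\}\times[s])|$.

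I expect this last computation to be the main obstacle. One must track exactly how the binomial normalisation interacts with the number $j$ of labels already placed at $u$, so that the resulting nontrivial eigenvalue comes out at most $1/K$ uniformly in $j$ and in the weights $w$. To check the bookkeeping, I would first work out the cases $K=2$ and $K=3$ by hand.
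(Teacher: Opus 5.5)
Your overall route is sound: compute the codimension-two links exactly, check that all links are connected, and apply Oppenheim's trickle-down, which turns $1/K$ into $1/(K-1),\dots,1/2$. The paper gives no proof of its own here; it cites \cite[Theorem~18]{Gol21}. The bounds $1/(t+2)$ are exactly this trickle-down sequence, so this is the natural argument. Your connectivity argument is fine. The hypothesis $|V|\ge4$ is not actually used for these link bounds.

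The step you defer does not work the way you predict, and two of your intermediate claims are false. Write $\Lambda$ for the $s-K+1$ fresh labels and $Q$ for the walk on fibres. All link weights factor through the fibres, so the link walk is $P=Q\otimes\frac{J-I}{|\Lambda|-1}$.

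\textbf{(1) The walk $Q$ is not balanced.} Suppose $F$ projects to $\{u,v\}$ with $j$ labels at $u$. The $uu$, $uv$ and $vv$ completions have weights $w/\binom{K-1}{j+1}$, $w/\binom{K-1}{j}$ and $w/\binom{K-1}{j-1}$. By Pascal's rule, $Q$ leaves the $u$-fibre with probability $\frac{K-1-j}{K}$ and the $v$-fibre with probability $\frac{j}{K}$. The fibre masses are therefore $\frac{j}{K-1}$ and $\frac{K-1-j}{K-1}$, which are not equal in general. What matters is that the two exit probabilities sum to $\frac{K-1}{K}$, so $\lambda_2(Q)=\frac1K$ for every $j$ and every $w$.

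Now suppose $F$ projects to $u$. Edges between the $u$-fibre and a $v$-fibre have weight $w(\{u,v\})$, edges inside a $v$-fibre have weight $w(\{u,v\})/(K-1)$, and the $u$-fibre has no internal edges. So each $v$-fibre is retained with probability $\frac1K$. The $u$-fibre and the $N$-fibres have masses in ratio $(K-1):K$, not $1:1$. The spectrum is $\Spec(Q)=\{1,-\frac{K-1}{K}\}\cup\{\frac1K\}$. The eigenvalue $\frac1K$ has multiplicity $|N|-1$, with eigenfunctions $g$ satisfying $g(u)=0$ and $\sum_v w(\{u,v\})g(v)=0$.

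\textbf{(2) The sum-zero part is not negative.} On functions summing to zero within each fibre, $P$ acts as $-\frac1{|\Lambda|-1}Q$. In the vertex case its spectrum therefore contains the positive value $\frac{K-1}{K(|\Lambda|-1)}$. It is still at most $\frac1K$, because $|\Lambda|-1=s-K\ge K$. This is where $s\ge 2K$ is needed in the spectral step, rather than a sign argument.

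With these corrections, the codimension-two bound $\lambda_2\le\frac1K$ holds, with equality for faces projecting to an edge, and your downward induction goes through. The eigenvalue $-\frac{K-1}{K}$ of vertex-type links also explains why only the second-largest eigenvalue is claimed. It is likewise why Section~\ref{sec:dimension-lift} passes to the $d$-skeleton with $K=2d$.
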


\begin{proof}[Proof of Theorem~\ref{thm:main-higher-dimensional}]
Fix $d\ge2$, set $K=2d$, and choose a power of two $s=2^a\ge2K$, where $a$ depends only on $d$. For each $k\ge2$, let $G=\Cay(\F_2^k,\{e_1,\ldots,e_k\})$ be the $k$-dimensional cube with uniform edge weights, identify $[s]$ with the additive group $A=\F_2^a$, and let $Z=Z(G;K,s)$ with top-face distribution $\mu_K$.

Let $Y$ be the $d$-skeleton of $Z$, equipped with the induced top-face distribution
\[
 \mu_d(T)=\binom{K+1}{d+1}^{-1}
 \sum_{\substack{\sigma\in Z(K)\\T\subseteq\sigma}}\mu_K(\sigma),
 \qquad T\in Z(d).
\]
Equivalently, sample a $K$-face according to $\mu_K$, then choose a uniform $(d+1)$-subset. The complex $Y$ is simple and pure, and $\mu_d$ has full support. Translation in $\F_2^k\times A$ preserves cube edges, distinctness of labels, and all face weights, so the weighted complex is translation invariant. Its one-skeleton is the Cayley graph on $\F_2^k\times A$ with generator set
\[
 S=\bigl(\{0,e_1,\ldots,e_k\}\bigr)
       \times\bigl(A\setminus\{0\}\bigr).
\]
Indeed, two vertices in the same fiber or in adjacent cube fibers form an edge exactly when their labels are distinct.  The set $S$ spans: for any $b\ne0$, one has $(e_i,b)+(0,b)=(e_i,0)$. Hence the one-skeleton is connected. Writing $n=k+a$, its degree is
\[
 |S|=(s-1)(k+1)=(s-1)(n-a+1)=\Theta_d(n).
\]
This order is optimal among connected Cayley complexes over $\F_2^n$ with connected vertex links, whose degree is at least $2n-1$~\cite[Corollary~181]{DLW25}.

It remains to check the links. Fix $0\le t\le d-2$, let $F\in Y(t)$, and, for a link edge $\{u,v\}$, define its unnormalized weight by $w_F^Y(u,v)=\sum_{T\supseteq F\cup\{u,v\}}\mu_d(T)$.  Counting the $(d+1)$-subsets of each $K$-face that contain $F\cup\{u,v\}$ gives
\[
 w_F^Y(u,v)
 =
 \frac{\binom{K-t-2}{d-t-2}}{\binom{K+1}{d+1}}
 \sum_{\substack{\sigma\in Z(K)\\F\cup\{u,v\}\subseteq\sigma}}
 \mu_K(\sigma).
\]
The binomial factor is independent of $u,v$. After normalization, the link walk of $Y_F$ is therefore identical to the random walk on the weighted one-skeleton of $Z_F$.

Now take $t=d-2$.  Theorem~\ref{thm:golowich-product-links} bounds the second-largest eigenvalue of this walk by $1/d$.  The link $Z_F$ has dimension $K-|F|=2d-(d-1)=d+1$, so Lemma~\ref{lem:universal-negative-eigenvalue} bounds its least eigenvalue by $-1/(d+1)$.  Consequently every nontrivial eigenvalue lies in $[-1/(d+1),1/d]$, which proves the required two-sided norm bound.  For every $0\le t\le d-2$, Theorem~\ref{thm:golowich-product-links} gives a second eigenvalue at most $1/(t+2)<1$, so each positive-dimensional link is connected.  Together with the connected one-skeleton, this proves all connectivity assertions.

The construction is explicit: since $K,s,a$ depend only on $d$, its $O_d(k)$ orbit representatives and exact rational weights can be enumerated from the $k$ cube directions in time polynomial in $n$. Letting $k\to\infty$ gives the required family.
\end{proof}

The induced weighted one-skeleton walk of $Y$ is also the same as that of $Z$. Since the cube walk has spectral gap $2/k$, the global part of~\cite[Theorem~18]{Gol21} gives the gap $2/(k\sum_{j=1}^{2d}1/j)$, which tends to zero. Thus these one-skeleta are connected but do not form an expander family.

%% file: 06-open-problems-and-limitations.tex
\section{Discussion and open problems}
\label{sec:open-problems}
\begin{itemize}
  \item \textbf{The range $0<\lambda<1/d$.} For fixed $d\ge3$, can the codimension-two bound $1/d$ in Theorem~\ref{thm:main-higher-dimensional} be improved while retaining polynomial Cayley degree? For the cube graphs used in Section~\ref{sec:dimension-lift}, the $d$-skeleton has a codimension-two link with eigenvalue $1/d$, even if we increase the dimension of the product complex before taking the skeleton. Thus increasing that dimension cannot give a local spectral norm below $1/d$.
  \item \textbf{Near-linear or linear Cayley degree.} Theorem~\ref{thm:main-higher-dimensional} gives optimal-order degree at the endpoint $1/d$. For fixed $0<\lambda<1/2$, can the two-dimensional degree in Theorem~\ref{thm:main-two-dimensional} be reduced to $n^{1+o(1)}$, or $O_\lambda(n)$? The exponent $6h$ in \eqref{eq:main-degree} comes from $\Phi(\cA(D))$. Improving it may require more functions in $\cA(D)$, kernel vectors outside this subspace, or fewer columns for the same nullity.
  \item \textbf{Stronger expansion notions.} We do not bound cofilling constants or establish cosystolic expansion. Can polynomial-degree abelian Cayley complexes satisfy the stronger assumptions used in low-soundness agreement testing, including coboundary or cosystolic expansion of associated faces complexes~\cite{DDL24,BLM24,HR26}?
\end{itemize}

%% file: Aknowledgement.tex
\section*{Statement on the use of AI}

The author used ChatGPT 5.6 Family (mainly Pro and Sol) for assistance with the proofs; Appendix~\ref{app:ai-research-note} describes the details. A Lean formalization written by Codex is available in~\cite{Mao26} for references. The author independently checked, simplified, and organized every proof and takes full responsibility for all statements and results in the paper.

\section*{Acknowledgments}
The author is grateful to Xin Li for the introduction of high-dimensional expanders and for early discussions. The author would like to thank Zihong Lin, Michael Ruofan Zeng, and a differential geometer who wishes to remain anonymous for helpful discussions and generous assistance.

%% file: appendix-ai-contributions.tex
\section{AI contributions and research note}
\label{app:ai-research-note}

We had worked on this problem for a long time. Although numerical experiments supported the feasibility of the parameters in Lemma~\ref{lem:matrix-family}, we had not found a construction achieving them. The ideas and proof techniques in Sections~\ref{sec:relation-matrices} and~\ref{sec:ag-directions} were subsequently developed by ChatGPT~5.6.

We gave ChatGPT approximate versions of the bounds in Lemma~\ref{lem:matrix-family} and asked for a matrix satisfying them, initially in dimension two. A finite example with unexpectedly large binary nullity led ChatGPT to identify multiplication over $\F_8$ in the construction. The additional kernel vectors came from functions affine along certain finite-field directions. Further experiments gave small direction sets with both a distance bound and additional kernel vectors, but did not establish the asymptotic bounds needed for an infinite family.

Coordinate directions gave exponentially large kernels but failed the spectral bound; other tested directions met this bound but left too few functions in $\cA(D)$. Exact Fourier and kernel formulas reduced the task to finding $O(t)$ directions with sufficient relative distance and exponentially many independent functions in $\cA(D)$. ChatGPT first used rational points on a Garcia--Stichtenoth tower to obtain the distance bound. It then observed that matrices formed from multiplication in Riemann--Roch spaces change by rank one along evaluation directions, making their maximal minors affine on the corresponding lines. Proving the minors independent as polynomials and as functions completed the infinite construction in Sections~\ref{sec:relation-matrices} and~\ref{sec:ag-directions}.

In subsequent interactions, ChatGPT developed the two-dimensional proof for arbitrary $\lambda$ and proposed an extension to general dimension. A subsequent review of the literature showed that the higher-dimensional statement follows more directly from the product construction of Golowich~\cite{Gol21}. We therefore replaced the proposed extension with the deduction in Section~\ref{sec:dimension-lift}.

At this point, we humans became heavily involved. We checked and revised the proofs, reducing a draft of more than 50 pages to roughly 10 pages by simplifying arguments and removing unnecessary steps. We also identified constructions that ChatGPT had used without recognizing their appearance in prior work, and corrected the attribution.

The final argument underwent several rounds of AI review and independent checking by the human authors. During the formalization of parts of the argument in Lean, Codex identified further issues in the proofs and notation, which we corrected.

The following approaches considered by ChatGPT did not give the required construction.

\begin{itemize}
\item \textbf{Cyclic and voltage constructions.} Weight-three matrices from voltage constructions and cyclic group rings allowed exact Fourier and mod-$2$ calculations. In the examples studied, increasing the binary nullity either introduced an unacceptable real eigenvalue or still left the Cayley degree superpolynomial in $n$.
\item \textbf{Finite geometry.} Some small configurations from finite geometry and coding theory satisfied the real spectral bound and had nonzero binary nullity. Their kernels were too small, while the larger examples tested had real eigenvalues outside the required range.
\item \textbf{Locally testable codes and weight reduction.} Binary three-query locally testable codes have many weight-three checks and positive rate, but projectivizing their columns did not turn tester soundness into the required two-sided bound on the real incidence spectrum. A finite weight-reduction gadget worked, but no suitable outer tester was found.
\item \textbf{Covers and concatenation.} We considered two-lifts, symmetric covers, and concatenation with an outer code to extend small examples. The tested constructions either failed the spectral bound, added only kernel vectors constant on the fibers of the cover, or produced columns representing the same projective point.
\end{itemize}

%% file: appendix-ag-background.tex
\clearpage
\section{Background on algebraic geometry}
\label{app:ag-background}

This appendix recalls the facts about curves, divisors, and Riemann--Roch spaces used in Section~\ref{sec:ag-directions}. See~\cite[Chapter~1]{Sti09} for proofs and further background.

\paragraph{Curves, function fields, and rational points.} Let $X$ be a smooth projective geometrically integral curve over a finite field $F$, with function field $F(X)$. Smoothness excludes singular points: at each closed point $P$, a nonzero rational function has the form $f=z^m u$, where $z$ is a local parameter and $u$ is regular and nonzero at $P$; the integer $m=\operatorname{ord}_P(f)$ measures its zero or pole. Projectivity means that $X$ is a closed curve in projective space, so points at infinity are included in divisor calculations. Geometric integrality means that $X_{\overline F}$ is irreducible and reduced; reduced means that its local rings have no nonzero nilpotents. Consequently, products of nonzero rational functions remain nonzero over $\overline F$. An $F$-rational point is a degree-one place. If $f$ has no pole at $P$, evaluation $f(P)\in F$ is defined and satisfies $(fg)(P)=f(P)g(P)$.

\paragraph{Genus.} The genus $g$ of a smooth projective curve is the dimension of its space of regular differential forms. Over $\mathbb C$, it equals the number of handles of the associated compact surface. The projective line has genus zero, while an elliptic curve has genus one. We use genus through the Riemann--Roch bound $\ell(G)\ge\deg G+1-g$, where $\ell(G)=\dim_F L(G)$, with equality whenever $\deg G>2g-2$~\cite[Theorems~1.5.15 and~1.5.17]{Sti09}.

\paragraph{Divisors and Riemann--Roch spaces.} A divisor is a finite formal sum $G=\sum_P n_PP$ over the closed points of $X$, with integer coefficients and support $\operatorname{supp}G=\{P:n_P\ne0\}$. Its degree is $\deg G=\sum_P n_P[\kappa(P):F]$, where $\kappa(P)$ is the residue field of $P$.  When the support consists of rational points, this reduces to $\sum_P n_P$.  A nonzero rational function $f$ has a principal divisor $\operatorname{div}(f)=\sum_P\operatorname{ord}_P(f)P$, whose positive and negative parts record its zeros and poles and have the same degree.  Define
\[
 L(G)=\{0\}\cup
 \{f\in F(X)^\times:\operatorname{div}(f)+G\ge0\}.
\]
When $G=mP_\infty$, this is the $F$-vector space of functions with no poles away from $P_\infty$ and with pole order at most $m$ there.  If $G$ is effective and $0\ne f\in L(G)$, the total degree of its zeros outside $\operatorname{supp}G$ is at most $\deg G$.  We also use the elementary multiplication rule $L(A)L(B)\subseteq L(A+B)$.

\paragraph{Vanishing at specified points.} For a rational point $P\notin\operatorname{supp}G$, subtracting $P$ imposes a zero: $L(G-P)=\{f\in L(G):f(P)=0\}$. Thus, for distinct $P,R$ outside the support, a function in $L(G-R)\setminus L(G-P-R)$ vanishes at $R$ but not at $P$. In Section~\ref{sec:ag-directions}, Riemann--Roch shows that these two spaces differ in dimension by one, proving that evaluation distinguishes the selected points.

\paragraph{Evaluation as a direction.} Let $f_1,\ldots,f_t$ be a basis of $W=L(G)$. The functional $e_P\in W^\vee$, defined by $e_P(f)=f(P)$, has coordinates $(f_1(P),\ldots,f_t(P))$ in the dual basis. Its projective class $[e_P]$ is the direction used in the construction. A hyperplane in $W^\vee$ has the form $\{x:x(f)=0\}$ for some nonzero $f\in W$, so counting evaluation directions in that hyperplane amounts to counting zeros of $f$.

\paragraph{The algebra used for maximal minors.} The symmetric algebra $\operatorname{Sym}_F(W)$ is the polynomial ring on $W^\vee$: each $f\in W$ represents the linear form $x\mapsto x(f)$. The nonvanishing of products over $\overline F$ gives the $1$-genericity used in Section~\ref{sec:determinantal-affine-functions}. Polynomials and their induced functions must be distinguished: $z^{|F|}-z$ is a nonzero polynomial that vanishes on $F$.